\documentclass[12pt,leqno,fleqn]{amsart}  
 \usepackage{amsmath,amstext,amssymb,amsxtra,paralist}
\usepackage{txfonts} % pxfonts txfonts 
\usepackage[T1]{fontenc}
\usepackage{lmodern}

 \usepackage{euler}   % better than the option below
          
\usepackage{amsrefs}

\usepackage{mathtools}
\mathtoolsset{showonlyrefs,showmanualtags}

\theoremstyle{plain} % definition 
\newtheorem{lemma}[equation]{Lemma} 
\newtheorem{proposition}[equation]{Proposition} 
\newtheorem{theorem}[equation]{Theorem}

\theoremstyle{definition}
\newtheorem{definition}[equation]{Definition}

\theoremstyle{remark}
\newtheorem{remark}[equation]{Remark}

\newtheorem*{ack}{Acknowledgment}

\usepackage{hyperref} %,hypertexnames=false,colorlinks,[pagebackref]
\hypersetup{
%    bookmarks=true,         % show bookmarks bar?
%    unicode=false,          % non-Latin characters in AcrobatÕs bookmarks
%    pdftoolbar=true,        % show AcrobatÕs toolbar?
%    pdfmenubar=true,        % show AcrobatÕs menu?
%    pdffitwindow=false,     % window fit to page when opened
%    pdfstartview={FitH},    % fits the width of the page to the window
%    pdftitle={My title},    % title
%    pdfauthor={Author},     % author
%    pdfsubject={Subject},   % subject of the document
%    pdfcreator={Creator},   % creator of the document
%    pdfproducer={Producer}, % producer of the document
%    pdfkeywords={keywords}, % list of keywords
%    pdfnewwindow=true,      % links in new window
    colorlinks=true,       % false: boxed links; true: colored links
    linkcolor=blue,          % color of internal links
    citecolor=magenta,        % color of links to bibliography
    filecolor=magenta,      % color of file links
    urlcolor=cyan           % color of external links
%    pagebackref=true
}

\setlength{\textwidth}{16.6cm}
\setlength{\topmargin}{0cm}
\setlength{\oddsidemargin}{0cm}
\setlength{\evensidemargin}{0cm}
\allowdisplaybreaks
\numberwithin{equation}{section}

\begin{document}
	\title[Two Weight Hilbert Transform]{Two Weight Inequality for the Hilbert Transform:\\ A Real Variable Characterization,  I}
\author[M.T. Lacey]{Michael T. Lacey}
\address{School of Mathematics \\
Georgia Institute of Technology \\
Atlanta GA 30332 }
\email{lacey@math.gatech.edu}
\thanks{M.L. supported in part by the NSF grant 0968499, and a grant from the Simons Foundation (\#229596 to Michael Lacey), 
and the Australian Research Council through grant ARC-DP120100399.}
\author[E.T. Sawyer]{Eric T. Sawyer}
\address{ Department of Mathematics \& Statistics, McMaster University, 1280
Main Street West, Hamilton, Ontario, Canada L8S 4K1 }
\email{sawyer@mcmaster.ca}
\thanks{E.T.S. Research supported in part by NSERC}
\author[C.-Y. Shen]{Chun-Yen Shen}
\address{Department of Mathematics\\ National Central University \\ Chungli, 32054,                                                                                                         
Taiwan }
\email{chunyshen@gmail.com}
\author[I. Uriarte-Tuero]{Ignacio Uriarte-Tuero}
\address{ Department of Mathematics \\
Michigan State University \\
East Lansing MI }
\email{ignacio@math.msu.edu}
\thanks{I.U.-T. has been partially supported by grants DMS-0901524, DMS-1056965 (US NSF),
MTM2010-16232, MTM2009-14694-C02-01 (Spain), and a Sloan Foundation Fellowship.}

\begin{abstract}
Let $\sigma $ and $w $ be locally finite positive Borel measures on $\mathbb{R}$ which do not share a common point mass.  
Assume that the pair of weights satisfy a  Poisson $ A_2$ condition, and satisfy the testing conditions below, for the Hilbert 
transform $ H$, 
\begin{equation*}
\int _{I}   H (\sigma  \mathbf 1_{I}) ^2 \;d w \lesssim \sigma (I) \,,  
\qquad 
\int _{I}   H (w  \mathbf 1_{I}) ^2\; d \sigma  \lesssim w (I) \,,  
\end{equation*}
with constants independent of the choice of interval $ I$.  Then $ H (\sigma \,\cdot )$ maps $ L ^2 (\sigma )$ to $ L ^2 (w)$, verifying a conjecture of Nazarov--Treil--Volberg. 
The proof has two components, a `global to local' reduction, carried out in this paper, and an analysis of the `local' problem, carried out in Part II of this series.
\end{abstract}

% primary subject areas 42A50,42B20 (47B38) 

\maketitle
\setcounter{tocdepth}{1}
\tableofcontents
  
%%%%%%%%%%%%%
\section{Introduction}
Define a truncated Hilbert transform of  a locally bounded signed measure $ \nu $ by  
\begin{equation*}
H _{\epsilon , \delta } \nu  (x) := \int _{ \epsilon < \lvert  y-x\rvert < \delta  } 
\frac{d\nu ( y) }{y-x}, \qquad 0 < \epsilon < \delta . 
 \end{equation*}
Given weights (i.e.\thinspace 
locally bounded positive Borel measures) $\sigma $ and $w $ on the real
line $\mathbb{R}$, we consider the following \emph{two weight norm
inequality} for the Hilbert transform,%
\begin{equation}
	\sup _{0 < \epsilon < \delta } \int_{\mathbb{R}}\vert H _{\epsilon ,\delta } ( f\sigma ) \vert ^{2}dw
	\le \mathscr N ^2 \int_{\mathbb{R}}\vert f\vert ^{2}\; d\sigma ,
\qquad  f\in L^{2}( \sigma ) ,  \label{two weight Hilbert}
\end{equation} 
where $\mathscr{N}$ is the best constant in the inequality,   uniform over all truncations of the 
Hilbert transform kernel.     Below, we will write the inequality above as $ \lVert H (f \sigma )\rVert_{L^2 (w)} \le \mathscr N \lVert f\rVert_{L ^2 (w)}$, 
that is the uniformity over the truncation parameters is suppressed.

The primary question is to find a real variable characterization of this inequality, and the theorem below 
is an answer to the beautiful conjecture of Nazarov-Treil-Volberg, see  \cite{V}.  Set 
\begin{equation} \label{e:Poisson}
P (\sigma ,I) := \int _{\mathbb R } \frac {\lvert  I\rvert } { \lvert  I\rvert ^2 + \textup{dist} (x,I) ^2  } \; \sigma (dx), 
\end{equation}
which is approximately the Poisson extension of $ \sigma $ to the upper half plane, evaluated at $ (x_I, \lvert  I\rvert) $, 
where $ x_I$ is the center of $ I$. 

%%%%%%%%%%%%%%%%%%%%%%%%%%%%%%  
\begin{theorem}
\label{t:main}Let $\sigma $ and $w $ be locally finite
positive Borel measures on the real line $\mathbb{R}$ with no common point
masses. Then, the two weight inequality \eqref{two weight Hilbert} holds if and only if these 
three conditions hold uniformly over all intervals $ I$, 
\begin{gather} \label{e:A2}
	P( \sigma, I )P( w, I ) \leq \mathscr A_2, 
	\\  \label{e:testing}
\int_{I}\vert H( \mathbf{1}_{I}\sigma ) \vert
^{2}\; d w \leq\mathscr{T} ^2 \sigma ( I), \qquad 
\int_{I}\vert H( \mathbf{1}_{I}w ) \vert
^{2}\; d\sigma \leq \mathscr{T} ^2 w ( I)\,.
\end{gather} 
	There holds 
\begin{equation}  \label{e:Hdef}
	\mathscr{N}\approx \mathscr A_{2} ^{1/2}+\mathscr{T} =: \mathscr H\,,  
\end{equation}
where $ \mathscr A_2$ and $ \mathscr T$ are the best constants in the inequalities above. 
\end{theorem}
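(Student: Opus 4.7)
The plan is to verify the two directions of the equivalence separately; the direction $\mathscr{N} \lesssim \mathscr{H}$ is where all the real work lies.

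\textbf{Necessity.} The testing inequalities \eqref{e:testing} follow from \eqref{two weight Hilbert} by plugging in $f = \mathbf{1}_I$ and restricting integration on the left-hand side to $I$, with duality giving the second of the two. For the Poisson $A_2$ bound, one exploits the lower bound $\lvert H(\sigma \mathbf{1}_I)(x)\rvert \gtrsim \sigma(I)/(\lvert I\rvert + \mathrm{dist}(x,I))$ valid for $x$ away from $I$: testing \eqref{two weight Hilbert} against $f = \sigma(I)^{-1/2}\mathbf{1}_I$ yields $P(w,I)^2 \sigma(I) \lesssim \mathscr{N}^2$, and symmetry produces \eqref{e:A2}.

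\textbf{Sufficiency.} Fix $f \in L^2(\sigma)$, $g \in L^2(w)$ and estimate $\langle H(f\sigma), g\rangle_w$. I would follow the Nazarov--Treil--Volberg template: introduce independent random dyadic grids $\mathcal{D}^\sigma, \mathcal{D}^w$, expand $f$ and $g$ into weighted martingale differences $\{h^\sigma_I\}_{I \in \mathcal{D}^\sigma}, \{h^w_J\}_{J \in \mathcal{D}^w}$ adapted respectively to $\sigma$ and $w$, and write
\begin{equation*}
\langle H(f\sigma), g\rangle_w = \sum_{I,J} \langle f, h^\sigma_I\rangle_\sigma \, \langle g, h^w_J\rangle_w \, \langle H(h^\sigma_I \sigma), h^w_J\rangle_w.
\end{equation*}
Apply the NTV good/bad decomposition to restrict to good pairs $(I,J)$ at the cost of an absorbable loss proportional to $\mathscr{N}$. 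Then split the remaining sum by geometry into pairs with $\lvert I\rvert \sim \lvert J\rvert$ and near, pairs that are far or disjoint, and the strictly nested pairs $J \subsetneq I$ (together with the symmetric case $I \subsetneq J$). The far and comparable blocks are treated directly: far pairs by kernel decay against the Poisson $A_2$ hypothesis, comparable pairs by Cauchy--Schwarz and the testing constant $\mathscr{T}$.

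The heart of the matter is the nested sum. For this I would set up a corona/stopping-time decomposition $\mathcal{F} \subset \mathcal{D}^\sigma$, where stopping intervals are selected by two criteria: a standard doubling of the $\sigma$-average of $\lvert f\rvert$, and a Poisson-based \emph{energy functional} that measures how the averages $P(\sigma\mathbf{1}_{F \setminus K}, K)/\lvert K\rvert$ vary as $K$ descends through the corona. The pivotal technical ingredient---and \textbf{the main obstacle}---is the \emph{energy inequality}: the sum of stopping energies over $\mathcal{F}$ must be controlled by $\mathscr{H}^2 \sigma(F)$, using only $\mathscr{A}_2$ and $\mathscr{T}$ and crucially not the unknown $\mathscr{N}$. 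Granting this, the bilinear form restricted to each corona can be replaced, modulo controlled error, by a \emph{local} model form on the stopping interval $F$ with averaged data, and quasi-orthogonality of the corona partition converts the original global bound into a family of local ones indexed by $F$. This is the \emph{global-to-local reduction} that Part I is advertised to carry out; bounding each local form by $\mathscr{H}^2$ against the appropriate restricted norms is deferred to Part II.
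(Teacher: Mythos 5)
Your outline of the necessity direction and the general NTV template for sufficiency (random grids, good/bad reduction, geometric splitting of the Haar double sum, corona stopping-time decomposition with both a Calder\'on--Zygmund and an energy stopping criterion, quasi-orthogonality) matches the paper's structure in broad strokes. However, there is a genuine gap in where you locate the main technical difficulty. You identify ``the main obstacle'' as the energy inequality---the estimate that the sum of stopping energies $\sum_{I\in\mathcal P} P(\sigma \mathbf 1_{I_0}, I)^2 E(w,I)^2 w(I)$ is controlled by $\mathscr H^2\sigma(I_0)$. But that inequality was already established by Lacey--Sawyer--Uriarte-Tuero in prior work (and the paper just recalls its short proof, Proposition~\ref{p:energy}): it is a \emph{single-scale} statement involving a single pairwise-disjoint family $\mathcal P$. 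The actual novelty of this paper, and the genuinely hard step your plan leaves unaccounted for, is the \emph{functional energy inequality} (Theorem~\ref{t.functionalEnergy}): a \emph{multi-scale} extension in which the Poisson averages are evaluated with respect to an arbitrary $h\in L^2(\sigma)$ and summed over a whole $\sigma$-Carleson family of stopping intervals, not a single partition.

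Concretely, your sentence ``the bilinear form restricted to each corona can be replaced, modulo controlled error, by a local model form'' is exactly the place where the difficulty lives. The ``controlled error'' consists of the cross-corona terms $B^{\textup{above}}(P^\sigma_{F'}f, Q^w_F g)$ with $F'\supsetneqq F$ (the paper's \eqref{e:BB}), and the single-scale energy inequality is not strong enough to dominate them: one needs the full functional energy estimate, which the paper establishes by recasting it as a two-weight norm inequality for the Poisson extension with a discrete measure $\mu$ on the upper half-plane built from the Haar projections $P^w_{F,J}(x/|J|)$, and then verifying the two associated Poisson testing conditions using $\mathscr A_2$, interval testing for $H$, the monotonicity property, and the $\sigma$-Carleson property of $\mathcal F$. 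This circle of ideas---the passage from Hilbert-transform testing to Poisson testing via Sawyer's two-weight Poisson theorem---is the central contribution of this Part I, and your proposal does not reach it. (A minor point: the paper works with a \emph{single} random grid $\mathcal D$, not two independent grids $\mathcal D^\sigma, \mathcal D^w$; the notation $\mathcal D^\sigma$, $\mathcal D^w$ is only there to flag which weight the martingale differences are adapted to.)
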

%%%%%%%%%%%%%%%%%%%%%%%%%%%%%%  

It is well known \cite{V} that the $ A_2$ condition is necessary for the norm inequality, and the inequalities \eqref{e:testing} are obviously necessary,  thus the content of the Theorem is the sufficiency of the $ A_2$ and testing inequalities.  
In this paper, we will carry out a `global to local' reduction in the proof of sufficiency, with the analysis of the `local' problem being carried out in 
part II of this series \cite{13014663}.  

The Nazarov-Treil-Volberg conjecture  has only been verified before under additional 
hypotheses on the pair of weights, hypotheses which are not necessary for the two weight inequality.  
The so-called pivotal condition of \cite{V} is not necessary, as was proved in \cite{10014043}. 
The pivotal condition is still an interesting condition: It is all that is needed to characterize the boundedness of the 
Hilbert transform, together with the maximal function in both directions. But, the boundedness of this triple of operators 
is decoupled in the two weight setting \cite{1109.2027}. 

Our argument has these attributes. 
Certain degeneracies of the pair of weights must be addressed, the   contribution of the innovative 2004  
	paper  of Nazarov-Treil-Volberg \cite{10031596}, also see \cite{V}, 
	which was further sharpened with the property of \emph{energy} in \cite{10014043}, a crucial property of the Hilbert transform.  This theme is further developed herein, 
	with notion of functional energy in \S\ref{s.core}.

 The proof should proceed through the analysis of the bilinear form $\langle H (\sigma f),  g w\rangle $, as  one expects certain paraproducts to appear.  Still, the paraproducts have no canonical form, suggesting that the proof be highly non-linear in $ f$ and $ g$.   
	The non-linear  point of view was initiated in \cite{11082319}, and is  central to 
	this paper.  A particular feature of our arguments is a repeated appeal to certain \emph{quasi-orthogonality} arguments, 
	providing (many)  simplifications over prior arguments.  For instance, we never find ourselves constructing auxiliary measures, 
	and verifying that they are Carleson, a frequent step in many related arguments. 
	
\bigskip

One can phrase a two weight inequality question for any operator $ T$, a question that became apparent with 
the foundational paper of Muckenhoupt \cite{MR0293384} on $ A_p$ weights for the maximal function. 
Indeed, the case of Hardy's inequality was quickly resolved by Muckenhoupt \cite{MR0311856}. The maximal 
function was resolved by one of us \cite{MR676801}, as well as  the fractional integrals, and, essential  for this paper,  Poisson integrals \cite{MR930072}.  
The latter paper established a result which closely paralleled the contemporaneous $ T1$ theorem of 
David and Journ\'e \cite{MR763911}.  This connection,  fundamental in nature,  was not fully appreciated until the innovative work 
of Nazarov-Treil-Volberg \cites{NTV4,NTV2,MR1998349} in developing a non-homogeneous theory of singular integrals.  
The two weight problem for dyadic singular integrals was only resolved recently \cite{MR2407233}. Partial information 
about the two weight problem for singular integrals \cite{ptv} was basic to the resolution of the $ A_2$ conjecture \cite{1007.4330}, 
and several related results \cites{1103.5229,MR2657437,ptv,ptv2}. Our result is the first real variable characterization 
of a two weight inequality for a continuous singular integral.

Interest in the two weight problem for the Hilbert transform arises from its natural occurrence in questions related to
operator theory \cites{MR1945291,MR1207406}, spectral theory \cite{MR1945291}, and model spaces \cite{MR2198367},
and analytic function spaces \cite{MR1617649}.  In the context of operator theory Sarason posed the conjecture (See \cite{MR1334345}.) that 
the Hilbert transform would be bounded if the pair of weights satisfied the  (full) Poisson $ A_2$ condition. This was disproved 
by Nazarov \cite{N1}.   Advances on these questions have been linked to finer understanding of the two weight question, 
see for instance \cites{NV,MR1945291}, which build upon Nazarov's counterexample.

\begin{ack}
The authors benefited from a stimulating conference on two weight inequalities at the American Institute of Mathematics, 
Palo Alto California, in October 2011.  The reviewing process has lead to many improvements in this paper, including a 
streamlining of the main contribution of this paper, for which we thank the referees.  
\end{ack}

%%%%%%%%%%%%%%%%%%%%%%%%%%%%%% SECTION  SECTION SECTION
%%%%%%%%%%%%%%%%%%%%%%%%%%%%%% SECTION  SECTION SECTION 
\section{Dyadic Grids and Haar Functions} \label{s.dyadic}

%%%%%%%%%%%%%%%%%%%%%%%%%%%%%% SUBSECTION SUBSECTION SUBSECTION SUBSECTION
 %%%%%%%%%%%%%%%%%%%%%%%%%%%%%% SUBSECTION SUBSECTION SUBSECTION SUBSECTION 
\subsection{Choice of Truncation}%\label{ss.}
 
We have stated the main theorem with `hard' cut-offs in the truncation of the Hilbert transform. 
There are many possible variants in the choice of truncation, moreover the proof of sufficiency  requires a different choice of truncation.  

Consider a  truncation given by  
\begin{align*}
\tilde H _{\alpha , \beta } (\sigma f) (x) &:= 
\int f (y)   K _{\alpha ,\beta } (y-x) \;{\sigma (dy)} 
\end{align*}
where $  K _{\alpha ,\beta } (y)$ is  chosen to minimize the technicalities associated with off-diagonal considerations. 
Specifically, set $  K _{\alpha ,\beta } (0) = 0$, and otherwise $  K _{\alpha ,\beta } (y)$ is odd and for $ y>0$
\begin{equation}\label{e:Kdef}
K _{\alpha ,\beta } (y) := 
\begin{cases}
-\frac y {\alpha ^2 } + \frac2 \alpha  & 0 < y < \alpha , 
 \\
\qquad \frac 1 y  & \alpha \leq  y \leq  \beta ,
\\
 -\frac y {\beta ^2 } + \frac2 \beta  &   \beta < y < 2 \beta , 
\\
\qquad 0 & 2 \beta \leq y . 
\end{cases}
\end{equation}
This is a $ C ^{1}$ function on $ (0, 2 \beta )$, and is Lipschitz, convex and monotone on $ (0, \infty )$.  

We now argue that  we can use these truncations in the proof of the sufficiency bound of our main theorem.  

%%%%%%%%%%%%%%%%%%%%%%%%%%%%%% PROPOSITION PROPOSITION PROPOSITION
\begin{proposition}\label{p:A22} If the pair of weights $ \sigma , w$ satisfy the $ A_2$ bound \eqref{e:A2}, 
then, one has the uniform norm estimate with the `hard' truncations \eqref{two weight Hilbert} if and only if one has uniform norm  estimate for the 
`smooth' truncations, 
\begin{equation} 
\sup _{0< \alpha < \beta } \lVert  \tilde H _{\alpha , \beta } (\sigma f)\rVert_{w} \le \mathscr N \lVert f\rVert_{\sigma }. 
\end{equation}
 \end{proposition}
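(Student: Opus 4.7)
The strategy is to show that, under the Poisson $A_2$ hypothesis \eqref{e:A2} alone, the difference between the hard and smooth truncations of the Hilbert transform defines a bilinear form of size at most $C\sqrt{\mathscr A_2}\,\|f\|_{L^2(\sigma)}\,\|g\|_{L^2(w)}$, uniformly in the truncation parameters. Given this, the equivalence of the two uniform norm bounds in both directions follows at once from the triangle inequality.

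Fix $0<\alpha<\beta$ and set
\[
D_{\alpha,\beta}(y):=K_{\alpha,\beta}(y)-\tfrac{1}{y}\mathbf 1_{\{\alpha<|y|<\beta\}}.
\]
Inspecting the piecewise definition \eqref{e:Kdef}, $D_{\alpha,\beta}$ is odd, vanishes outside $\{|y|<\alpha\}\cup\{\beta<|y|<2\beta\}$, and satisfies $|D_{\alpha,\beta}(y)|\le 2/\alpha$ on the inner zone and $|D_{\alpha,\beta}(y)|\le 1/\beta$ on the outer zone. It therefore suffices to prove that, at each scale $t>0$ and for any $f\in L^2(\sigma)$, $g\in L^2(w)$,
\[
\frac{1}{t}\Bigl|\iint_{|y-x|<2t} f(y)\,g(x)\,\sigma(dy)\,w(dx)\Bigr|\ \lesssim\ \sqrt{\mathscr A_2}\,\|f\|_{L^2(\sigma)}\,\|g\|_{L^2(w)},
\]
applied at $t=\alpha$ for the inner error and at $t=\beta$ for the outer annular error.

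To establish this I would partition $\mathbb R$ into disjoint intervals $\{I_j\}$ of common length $t$; the restriction $|y-x|<2t$ then couples $y\in I_j$ to $x\in I_k$ only for finitely many values of $k-j$. Two applications of Cauchy--Schwarz reduce the bilinear form to a sum of the type
\[
\sum_j\sum_{|k-j|\lesssim 1}\frac{\sigma(I_j)^{1/2} w(I_k)^{1/2}}{t}\,\|f\mathbf 1_{I_j}\|_{L^2(\sigma)}\,\|g\mathbf 1_{I_k}\|_{L^2(w)}.
\]
The key numerical input is the elementary inequality $\sigma(I)/|I|\le C\,P(\sigma,I)$, together with the comparability $P(\sigma,I_j)\approx P(\sigma,I_k)$ for adjacent intervals of equal length. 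Combined, these yield $\sigma(I_j)^{1/2}w(I_k)^{1/2}/t\lesssim (P(\sigma,I_j)P(w,I_k))^{1/2}\lesssim\sqrt{\mathscr A_2}$, after which a final Cauchy--Schwarz in $j$ delivers the claimed bound.

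The main obstacle is purely bookkeeping: ensuring uniformity across the full range of truncation parameters (including the limits $\alpha\downarrow 0$ and $\beta\uparrow\infty$), and noting that the reverse comparison (hard $H_{\epsilon,\delta}$ back to smooth $\tilde H_{\alpha,\beta}$) is handled by identical estimates with a suitable choice of intermediate parameters. The no-common-point-mass hypothesis is used in the background to guarantee that the bilinear form $\langle H(\sigma f),g\rangle_w$ is unambiguously defined for every admissible truncation, so that the kernel differences really do compare one truncation to another.
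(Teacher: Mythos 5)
Your proposal is correct and follows essentially the same route as the paper: the paper simply observes that $\lvert H_{\alpha,\beta}(\sigma f)-\tilde H_{\alpha,\beta}(\sigma f)\rvert\lesssim A_\alpha(\sigma|f|)+A_\beta(\sigma|f|)$ for single-scale averaging operators $A_t$ and asserts that the (simple) $A_2$ bound controls these uniformly, which is exactly the single-scale bilinear estimate you prove by tiling $\mathbb R$ with length-$t$ intervals, using $\sigma(I)/|I|\le P(\sigma,I)$ and Cauchy--Schwarz. The only difference is that you spell out the step the paper leaves implicit; your two-sentence remark about ``intermediate parameters'' for the reverse direction is superfluous, since the same uniform bound on the difference kernel gives both implications at once via the triangle inequality.
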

%%%%%%%%%%%%%%%%%%%%%%%%%%%%%% PROPOSITION PROPOSITION PROPOSITION

Indeed, $ \lvert  H _{\alpha , \beta } (\sigma f) - \tilde H _{\alpha , \beta } (\sigma f)\rvert \lesssim A _{\alpha } (\sigma \lvert  f\rvert ) 
+ A _{\beta } (\sigma \lvert  f\rvert )$, where these last two operators are `single-scale' averages, namely 
\begin{equation*}
A _{\alpha  } ( \sigma \phi ) (x) = \alpha ^{-1} \int _{ (x- 3 \alpha , x+ 3 \alpha )} \phi (y) \; \sigma (dy) . 
\end{equation*}
But, the (simple) $ A_2$ bound is all that is needed to provide a uniform bound on the operators $ A _{\alpha } (\sigma \phi )$.  So the proposition 
follows.

Henceforth we use the truncations $ \tilde H _{\alpha , \beta }$, and we suppress the tilde in the notation.  
The particular  choice of truncation is  motivated by this off-diagonal estimate on the kernels.  

%%%%%%%%%%%%%%%%%%%%%%%%%%%%%% PROPOSITION PROPOSITION PROPOSITION
\begin{proposition}\label{p:grad} Suppose that $ 2 \lvert  x-x'\rvert < \lvert  x-y\rvert  $, then 
\begin{gather}\label{e:grad}
\begin{split}
{K _{\alpha , \beta } (y-x') - K _{\alpha , \beta } (y-x) }  = C _{x,x',y} \frac {x'-x} { (y-x) (y-x')} , 
\\
\textup{where} \quad 
C _{x,x',y}  = 1 \qquad    2\alpha < \lvert  x-y\rvert < \tfrac 12  \beta , 
\end{split}
\end{gather}
and is otherwise positive and never more than $ 4$. 
\end{proposition}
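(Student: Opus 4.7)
The plan is as follows. By the oddness of $K_{\alpha,\beta}$, we may assume $y > x$; the case $y < x$ follows by negating all three variables. The gap hypothesis $2\lvert x-x'\rvert < \lvert x-y\rvert$ then forces $y - x' > 0$ together with the comparability $\tfrac{1}{2}(y-x) < y - x' < \tfrac{3}{2}(y-x)$. Writing $a := y-x$ and $b := y-x'$, both positive and comparable, the elementary identity
\[
\frac{x'-x}{(y-x)(y-x')} \;=\; \frac{a-b}{ab} \;=\; \frac{1}{b} - \frac{1}{a}
\]
recasts the proposition as the single assertion that
\[
C_{x,x',y} \;=\; \frac{K_{\alpha,\beta}(b) - K_{\alpha,\beta}(a)}{1/b - 1/a} \in [0,1],
\]
with $C_{x,x',y} = 1$ on the stated intermediate range of $\lvert x-y\rvert$.

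The \emph{middle-zone equality} is essentially automatic. If $2\alpha < a < \tfrac{2}{3}\beta$, the comparability places $b$ also inside $(\alpha,\beta)$, where $K_{\alpha,\beta}(t) \equiv 1/t$. The numerator and denominator of $C_{x,x',y}$ then agree term-by-term and $C_{x,x',y} = 1$. The \emph{positivity} $C_{x,x',y} \geq 0$ follows because both $K_{\alpha,\beta}$ and $t \mapsto 1/t$ are nonincreasing on $(0,\infty)$; consequently $K_{\alpha,\beta}(b) - K_{\alpha,\beta}(a)$ and $1/b - 1/a$ share the sign of $a - b$.

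For the \emph{upper bound}, I would partition according to which of the four pieces $(0,\alpha)$, $[\alpha,\beta]$, $[\beta,2\beta]$, $[2\beta,\infty)$ contain $a$ and $b$. The comparability $a/2 < b < 3a/2$ forces $a$ and $b$ either into the same piece or into two adjacent pieces, so only a handful of configurations arise. In each, the explicit piecewise formula for $K_{\alpha,\beta}$ reduces the bound $C_{x,x',y} \leq 1$ to an elementary algebraic comparison between the reciprocal function $1/t$ and the relevant linear extension on the inner cutoff region $(0,\alpha)$ or outer cutoff region $(\beta, 2\beta)$.

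The principal obstacle is organizational rather than conceptual: collating the case analysis across the breakpoints $\alpha$, $\beta$, $2\beta$, especially where $a$ and $b$ straddle one of them. Conceptually each case is a direct verification, and the comparability built in from the gap hypothesis keeps the case list finite and short, so the estimate emerges by inspection of the piecewise formula \eqref{e:Kdef} rather than by invoking any finer property of $K_{\alpha,\beta}$.
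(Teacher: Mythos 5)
Your plan coincides with the paper's through the oddness reduction, the middle-zone equality, and positivity; the only substance is the upper bound $C_{x,x',y}\le 1$, which the paper settles by appealing to the convexity of $1/y$ (written there, misleadingly, as ``concavity''): $K_{\alpha,\beta}$ is a tangent-line minorant of $1/y$, so $t\mapsto 1/t-K_{\alpha,\beta}(t)$ is nonincreasing on $(0,\beta)$, and hence the increment of $K_{\alpha,\beta}$ is dominated by that of $1/t$ once both $y-x$ and $y-x'$ lie in $(0,\beta)$. You instead propose the case analysis over the four pieces and then \emph{defer} it, asserting the bound ``emerges by inspection.'' That deferral is the gap: it is the only step with content, and you cannot announce its conclusion without doing it.

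Moreover, if you actually carry it out you will find a genuine failure. If $y-x$ and $y-x'$ both lie in $(\beta,2\beta)$ --- for instance $y-x=1.5\beta$, $y-x'=1.9\beta$, which satisfies the gap hypothesis $2|x-x'|<|x-y|$ --- then $K_{\alpha,\beta}$ is affine there with slope $-1/\beta^2$, and one computes $C_{x,x',y}=(y-x)(y-x')/\beta^2>1$: the tangent at $\beta$ falls \emph{faster} than $1/y$ for $y>\beta$, which is the reverse of what happens on $(0,\alpha)$. So the assertion ``never more than one'' is false as written; the proposition needs the tacit extra hypothesis $|x-y|<\tfrac23\beta$, which together with the gap pins both $y-x$ and $y-x'$ into $(0,\beta)$. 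That restriction is automatically in force wherever Lemma~\ref{mono} uses the proposition, since there $\beta>2|K|$ while $x,x',y$ all lie in $K$. To make your proof correct, carry out the cases and impose this restriction --- at which point the cleanest route is the paper's own observation that $1/t-K_{\alpha,\beta}(t)$ is nonincreasing on $(0,\beta)$.
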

%%%%%%%%%%%%%%%%%%%%%%%%%%%%%% PROPOSITION PROPOSITION PROPOSITION

%%%%%%%%%%%%%%%%%%%%%%%%%%%%%% PROOF PROOF PROOF
\begin{proof}
The assumptions imply that $ y-x'$ and $y-x$ have the same sign. Assume, without loss of generality that  $0<   y-x' < y-x$.   
If $ 2\alpha < \lvert  x-y\rvert < \frac 12 \beta $, it follows that $ \alpha < \lvert  x'-y\rvert < \beta $,  
and so by the definition 
\begin{align*}
{K _{\alpha , \beta } (y-x') - K _{\alpha , \beta } (y-x) }  =  \frac 1 {y-x'} - \frac 1 {y-x} = \frac {x'-x} { (y-x) (y-x')} . 
\end{align*}
And, in the general case, there holds $ \lvert  \frac d {dt}K _{\alpha , \beta } (t)\rvert \le 4 t ^{-2}$, so that 
\begin{align*}
0 \leq  {K _{\alpha , \beta } (y-x') - K _{\alpha , \beta } (y-x) }
& \leq  \int ^{y-x} _{y-x'} \frac 4 {t ^2 } \; dt=4  \frac {x'-x} { (y-x) (y-x')} . 
\end{align*}
\end{proof}
%%%%%%%%%%%%%%%%%%%%%%%%%%%%%% PROOF PROOF PROOF

%%%%%%%%%%%%%%%%%%%%%%%%%%%%%% SUBSECTION* SUBSECTION* SUBSECTION* SUBSECTION*
 %%%%%%%%%%%%%%%%%%%%%%%%%%%%%% SUBSECTION* SUBSECTION* SUBSECTION* SUBSECTION* 
\subsection{Dyadic Grids.}%\label{ss.}
A collection of intervals $ \mathcal G$ is a \emph{grid} if for all $ G,G'\in \mathcal G$, we have $ G\cap G' \in \{\emptyset , G, G'\}$.  
By a \emph{dyadic grid} we mean a grid  $ \mathcal D $ of intervals of $ \mathbb R $ such that 
for each interval $ I\in \mathcal D$, the subcollection 
$ \{ I' \in \mathcal D \;:\; \lvert  I'\rvert= \lvert  I\rvert  \}$ partitions $ \mathbb R $, aside from endpoints 
of the intervals.  In addition, the  left and right halves of $ I$, denoted by $ I _{\pm}$, are also in $ \mathcal D$.  

For $I\in \mathcal{D}$, the left and right halves  $I_{\pm}$ 
are referred to as the \emph{children} of $I$.  We denote by $\pi _{\mathcal{D}}I $ the unique interval
in $\mathcal{D}$ having $I$ as a child, and we refer to $\pi _{\mathcal{D}%
}I$ as the $\mathcal{D}$-parent of $I$.

We will work with subsets $ \mathcal F \subset \mathcal D$.  We say that $ I$ has \emph{$ \mathcal F$-parent} $ \pi _{\mathcal F}I=F$ 
if $ F\in \mathcal F$ is the minimal element of $ \mathcal F$ that contains $ I$.
%The $ \mathcal F$-children of $ F\in \mathcal F$  are the maximal $ F'\in \mathcal F$ which are strictly contained in $ F$. 

%%%%%%%%%%%%%%%%%%%%%%%%%%%%%% SUBSECTION* SUBSECTION* SUBSECTION* SUBSECTION*
 %%%%%%%%%%%%%%%%%%%%%%%%%%%%%% SUBSECTION* SUBSECTION* SUBSECTION* SUBSECTION* 
\subsection{Haar Functions.}%\label{ss.}
Let $ \sigma $ be a weight on $ \mathbb R $, one that does not assign positive mass to any endpoint 
of a dyadic grid $ \mathcal D$. If $ I\in \mathcal D$ is such that $ \sigma $ assigns non-zero weight to 
both children of $ I$, the associated Haar function is 
\begin{align}\label{e.hs1}
	h_{I}^{\sigma} & := \sqrt{\frac{\sigma(I_{-}) \sigma( I_{+})} {\sigma( I)}} 
\biggl( - \frac{{I_{-}}}{ \sigma( I_{-})} + \frac{{I_{+}}}{\sigma( I_{+})} \biggr)\,.    
\end{align}
In this definition, we are identifying an interval with its indicator function, and we will do so  throughout the remainder of the paper.  
This is  an $ L ^2 (\sigma )$-normalized  function, and   has $ \sigma $-integral zero.
For any dyadic interval $ I_{0}$, it holds that 
$  \{\sigma (I_0) ^{-1/2} {I_0}\} \cup \{ h ^{\sigma} _I \;:\; I\in \mathcal D\,, I \subset I_0\}$ is an orthogonal basis for $ L ^2 ( I_0, \sigma )$.  

We will use the notations  $ \hat f (I) = \langle f, h ^{\sigma } _{I} \rangle _{\sigma } $, as well as  
\begin{align}\label{e.mart}
\Delta ^{\sigma} _{I}f &= \langle f, h ^{\sigma} _{I} \rangle _{\sigma} h ^{\sigma} _{I}
= {I _{+}} \mathbb E ^{\sigma} _{I _{+}} f + 
{I _{-}} \mathbb E ^{\sigma} _{I _{-}}f - {I} 
\mathbb E ^{\sigma} _{I} f \,.  
\end{align}
The second equality is the familiar martingale difference equality, and so 
we will refer to $ \Delta ^{\sigma} _{I} f$ as a martingale difference.  It implies the familiar telescoping identity 
$
	\mathbb E _{J} ^{\sigma }f = \sum_{ I \;:\; I\supsetneq J} \mathbb E _{J} ^{\sigma } \Delta ^{\sigma } _{I} f \,. 
$

For any function the \emph{Haar support} of $ f$ is the collection $ \{I  \in \mathcal D\;:\; \hat f (I) \neq 0  \}$.

%%%%%%%%%%%%%%%%%%%%%%%%%%%%%% SECTION  SECTION SECTION
%%%%%%%%%%%%%%%%%%%%%%%%%%%%%% SECTION  SECTION SECTION 
\subsection{Good-Bad Decomposition} %\label{s.} 

With a choice of dyadic grid $ \mathcal D$ understood, we 
 say that $ J\in \mathcal D$ is \emph{$ (\epsilon ,r)$-good} if and only if  for all intervals $ I \in \mathcal D$ with $ \lvert  I\rvert\ge 2 ^{r-1}  \lvert  J\rvert $,  the distance from $ J$ to the boundary of \emph{either child} of $ I$ is at least $ \lvert  J\rvert ^{\epsilon } \lvert  I\rvert ^{1- \epsilon }$.    

For $ f\in L ^2 (\sigma )$ we set $ P _{\textup{good}} ^{\sigma } f = \sum_{ \substack{I \in \mathcal D \\  \textup{$ I$ is $(\epsilon,r)$-good}} }  \Delta ^{\sigma } _{I  }f $.  The projection $ P _{\textup{good}} ^{w } g  $ is defined similarly. 
To make the two reductions below, one must  make a \emph{random} selection of grids, as is detailed in 
\cites{10014043,V}.  
 The use of random dyadic grids has been a basic tool since the foundational work of 
\cites{NTV4,NTV2,MR1998349}.   
Important elements of the suppressed construction of random grids  are  that 
%%  ENUMERATE
\begin{enumerate}
\item It suffices to consider a single dyadic grid $ \mathcal D$.

\item For any fixed $ 0<\epsilon< \frac 12 $, we can choose integer $ r$ sufficiently large so that  it suffices 
	to consider $ f$ such that $ f= P _{\textup{good}} ^{\sigma } f$, and likewise for $ g \in L ^2 (w)$. 
	Namely,   it suffices to estimate the constant 
	below, for arbitrary dyadic grid $ \mathcal D$, 
	\begin{equation*}
  \lvert \langle H _{\sigma } f, g \rangle _{w} 
 	   \rvert \le \mathscr N  _{\textup{good}}  \lVert f\rVert_ \sigma  \lVert g\rVert_ w\,, 
\end{equation*}
where it is required that $ f = P ^{\sigma } _{\textup{good}} \in L ^2 (\sigma )$ and $ g = P ^{w} _{\textup{good}}\in L ^2 (w)$. 
\end{enumerate}
%% ENUMERATE
 
 That the functions are good is, 
at some moments, an essential property. We suppress it in notation, however 
taking care to emphasize in the text those places in which we appeal to the property of being good.  

A reduction, using  randomized dyadic grids, allows one the extraordinarily useful reduction in the next Lemma.   This is a well-known reduction, due to Nazarov--Treil--Volberg,  explained in full detail in the current setting, in \cite{10031596}*{\S4}.   Below, $ \mathscr H$ is as in \eqref{e:Hdef}, the 
normalized sum of the $ A_2 $ and testing constants. 

%%%%%%%%%%%%%%%%%%%%%%%%%%%%%% LEMMA LEMMA LEMMA
\begin{lemma}\label{l:good} For all sufficiently small $ \epsilon $, and sufficiently large $ r$, this holds.  
Suppose that for any dyadic grid $ \mathcal D$, such that 
 no endpoint of an interval $ I\in \mathcal D$ is a point mass for $ \sigma $ or $ w$,\footnote{
This set of dyadic grids that fail this condition have probability zero in  standard constructions of the random dyadic grids.}
 there holds 
\begin{equation} \label{e:Hfg}
\lvert  \langle H _{\sigma } P ^{\sigma } _{\textup{good}}f, P ^{w} _{\textup{good}}g \rangle _{w}\rvert \lesssim \mathscr H \lVert f\rVert_{\sigma } \lVert g\rVert_{w} \,.  
\end{equation}
Then, the same inequality holds without the projections $ P ^{\sigma } _{\textup{good}}$, and $ P ^{w } _{\textup{good}}$.
\end{lemma}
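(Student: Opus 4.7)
The plan is to invoke the random-grid averaging argument of Nazarov--Treil--Volberg, using the smooth truncations of Proposition~\ref{p:A22} to ensure an a priori finite operator norm during an absorption step. Construct two independent random dyadic grids $\mathcal{D}^\sigma$ and $\mathcal{D}^w$ by random translations of a reference grid, with the almost-sure property that no endpoint carries $\sigma$- or $w$-mass. The key probabilistic input is that, for any interval $J$ of one grid, the probability it fails to be $(\epsilon,r)$-good relative to the other grid is at most $C_\epsilon 2^{-\epsilon r}$; combined with Haar orthogonality in $L^2(\sigma)$, this yields
\begin{equation*}
\mathbb{E}\,\bigl\lVert P^\sigma_{\mathrm{bad}} f\bigr\rVert_\sigma^2 \le \tau(\epsilon,r)^2 \,\lVert f\rVert_\sigma^2, \qquad \tau(\epsilon,r)\to 0 \text{ as } r\to\infty,
\end{equation*}
and the analogous estimate with $w$ and $g$.

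Now fix $f\in L^2(\sigma)$, $g\in L^2(w)$, and a smooth truncation $\tilde H_{\alpha,\beta}$ whose operator norm $\mathscr N_{\alpha,\beta}$ is a priori finite by the $A_2$ bound. Because $\langle \tilde H(\sigma f),g\rangle_w$ is deterministic, it equals its expectation over $(\mathcal{D}^\sigma,\mathcal{D}^w)$. Split
\begin{align*}
\langle \tilde H(\sigma f), g\rangle_w &= \langle \tilde H(\sigma P^\sigma_{\mathrm{good}} f), P^w_{\mathrm{good}} g\rangle_w + \langle \tilde H(\sigma P^\sigma_{\mathrm{bad}} f), g\rangle_w \\
&\quad + \langle \tilde H(\sigma P^\sigma_{\mathrm{good}} f), P^w_{\mathrm{bad}} g\rangle_w.
\end{align*}
For (almost) every realization of the grids, the hypothesis bounds the good--good term by $\mathscr H \lVert f\rVert_\sigma \lVert g\rVert_w$, so the same holds after taking $\mathbb E$. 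The two remaining terms are estimated by the a priori norm and Cauchy--Schwarz in probability, using the bad-norm estimate above, to give $\mathscr N_{\alpha,\beta}\,\tau\,\lVert f\rVert_\sigma \lVert g\rVert_w$ each. Summing produces the self-referential bound
\begin{equation*}
\mathscr N_{\alpha,\beta} \le \mathscr H + 2\tau\, \mathscr N_{\alpha,\beta},
\end{equation*}
and choosing $r$ so large that $2\tau<1$ absorbs the tail and yields $\mathscr N_{\alpha,\beta}\lesssim \mathscr H$ uniformly in $\alpha,\beta$. Proposition~\ref{p:A22} then transfers the bound back to the hard truncations, as required.

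The main obstacle is the probabilistic bad-norm estimate: one must sum, over all ancestor scales $|I|\ge 2^{r-1}|J|$ and over the random shift, the $\sigma$-mass of the buffer region of width $|J|^\epsilon|I|^{1-\epsilon}$ around each grandchild boundary, and show the total is geometrically small in $r$. This is a careful geometric computation executed in \cite{10031596}*{\S4}, which the paper cites. A secondary subtlety worth flagging is that the absorption step makes sense only because the $A_2$ condition together with the particular smooth kernel \eqref{e:Kdef} guarantees $\mathscr N_{\alpha,\beta}<\infty$ to begin with; without this, the inequality $\mathscr N_{\alpha,\beta}\le \mathscr H + 2\tau\mathscr N_{\alpha,\beta}$ cannot be solved for $\mathscr N_{\alpha,\beta}$.
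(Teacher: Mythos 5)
Your proposal reproduces the standard Nazarov--Treil--Volberg good--bad averaging-and-absorption argument, which is exactly what the paper appeals to: the paper itself does not prove Lemma~\ref{l:good} but cites \cite{10031596}*{\S4} for the details. The essential components of your sketch are all correct and in the right order: (i) the probabilistic estimate that each interval is bad with probability $\lesssim 2^{-\epsilon r}$, hence $\mathbb{E}\,\lVert P_{\mathrm{bad}} f\rVert_\sigma^2 \le \tau^2 \lVert f\rVert_\sigma^2$ with $\tau\to 0$; (ii) the decomposition of the deterministic bilinear form into good--good and two bad pieces; (iii) bounding the bad pieces with the a priori finite norm $\mathscr N_{\alpha,\beta}$ of the \emph{fixed} smooth truncation; and (iv) absorption via $\mathscr N_{\alpha,\beta}\le C\mathscr H + 2\tau\mathscr N_{\alpha,\beta}$, followed by Proposition~\ref{p:A22} to pass back to hard truncations. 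You are also right to flag that the a priori finiteness of $\mathscr N_{\alpha,\beta}$ (from the bounded, compactly supported kernel plus the simple $A_2$ estimate) is precisely what makes the absorption step legitimate.

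One small mismatch worth noting: you build \emph{two independent} random grids $\mathcal D^\sigma$, $\mathcal D^w$ and define goodness of an interval in one grid relative to the other, whereas the paper's framework (and the hypothesis \eqref{e:Hfg}) is phrased for a \emph{single} random grid $\mathcal D$, with goodness defined relative to $\mathcal D$ itself; the notations $\mathcal D^\sigma$, $\mathcal D^w$ are declared to be cosmetic. Both routes work, but if you invoke \eqref{e:Hfg} as stated you must keep a single grid; otherwise you need a two-grid version of the hypothesis (which the NTV argument does supply, but it is not literally what the lemma assumes). Adopting the single-grid convention throughout removes this mismatch with no loss, so this is a presentation issue rather than a gap.
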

%%%%%%%%%%%%%%%%%%%%%%%%%%%%%% LEMMA LEMMA LEMMA

Inequality \eqref{e:Hfg} should be understood as an inequality, uniform over the class of smooth truncations of the Hilbert transform. 
But, we can suppress this in the notation without causing confusion. 
The bilinear form only needs to be controlled for \emph{$ (\epsilon ,r)$-good functions $ f$ and $ g$, goodness being defined with respect to a fixed dyadic grid.} 
Suppressing the notation, we write `good' for `$(\epsilon,r)$-good,' and  it is always   assumed that  the dyadic grid $ \mathcal D$ is fixed, and only good intervals are in the Haar support of $ f$ and $ g$, though  is also suppressed in the notation.

%%%%%%%%%%%%%%%%%%%%%%%%%%%%%% SECTION  SECTION SECTION
%%%%%%%%%%%%%%%%%%%%%%%%%%%%%% SECTION  SECTION SECTION 
\section{The Global to Local Reduction} \label{s:global}

The goal of this section is to reduce the analysis of the bilinear form in \eqref{e:Hfg} to the local estimate, \eqref{e:BF}.
It is sufficient to assume that $ f$ and $ g$ are supported  on an interval $ I^0$; by trivial use of the interval testing condition, we can further assume that $ f$ and $ g$ are of integral zero in their respective spaces.  
Thus, $ f $ is in the linear span of (good) Haar functions $ h ^{\sigma } _I$ for $ I\subset I^0$, and similarly for $ g$, and 
\begin{equation*}
\langle H _{\sigma } f,g \rangle_w = \sum_{I, J \::\: I,J\subset I^0} 
\langle H _{\sigma } \Delta ^{\sigma }_I f, \Delta ^{w} _{J} g \rangle _w \,. 
\end{equation*}
The argument is independent of the choice of truncation that implicitly appears in the inner product above.

The double sum is broken into different summands.  Many of the resulting cases are elementary, and we summarize these 
estimates as follows.  Define the bilinear form 
\begin{equation*}
B ^{\textup{above}} (f,g) := \sum_{I \::\: I\subset I^0} \sum_{J \::\: J\Subset I_J} 
\mathbb E ^{\sigma } _{I_J} \Delta ^{\sigma }_I f \cdot   \langle H _{\sigma } I_J, \Delta ^{w} _{J} g \rangle _w 
\end{equation*}
where here and throughout, $ J\Subset I$ means $ J\subset I$ and $ 2 ^{r}\lvert  J\rvert \le \lvert  I\rvert $. 
In addition, the argument of the Hilbert transform, $ I_J$,  is the child of $ I$ that contains $ J$, so that $  \Delta ^{\sigma }_I f$ is constant on $ I_J$.
Define $ B ^{\textup{below}} (f,g) $ in the dual fashion.  

%%%%%%%%%%%%%%%%%%%%%%%%%%%%%% LEMMA LEMMA LEMMA
\begin{lemma}\label{l:above}  There holds, with the notation of \eqref{e:Hdef}, 
\begin{equation*}
\bigl\lvert  \langle H _{\sigma } f ,g \rangle _{w} - B ^{\textup{above}} (f,g)  - B ^{\textup{below}} (f,g) \bigr\rvert 
\lesssim \mathscr H \lVert f\rVert_{\sigma } \lVert g\rVert_{w}  \,. 
\end{equation*}
\end{lemma}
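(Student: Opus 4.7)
The plan is to expand
\begin{equation*}
\langle H_\sigma f, g\rangle_w = \sum_{I,J \subset I_0} \langle H_\sigma \Delta_I^\sigma f, \Delta_J^w g\rangle_w
\end{equation*}
and organize the terms by the combinatorial relation of $I$ to $J$: (a) the diagonal $I=J$; (b) deeply nested pairs $J \Subset I$ or dually $I \Subset J$; (c) shallowly nested pairs where neither is deeply inside the other; and (d) disjoint pairs $I \cap J = \emptyset$. Case (a) is handled by the interval testing constant $\mathscr T$ via standard Haar paraproduct manipulations. Goodness of the Haar supports of $f$ and $g$ collapses case (c) to an $r$-bounded family of terms per $J$, which can be absorbed into $\mathscr H$ by direct appeal to $A_2$ and testing. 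The substantive work lies in cases (b) and (d).

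For a deeply nested pair $J \Subset I$, I use the crucial fact that $\Delta_I^\sigma f$ is constant on each child of $I$. Let $I_J$ denote the child of $I$ containing $J$ and write $\theta I_J$ for its sibling. Then
\begin{equation*}
\Delta_I^\sigma f = (\mathbb E^\sigma_{I_J}\Delta_I^\sigma f)\,\mathbf 1_{I_J} + (\mathbb E^\sigma_{\theta I_J}\Delta_I^\sigma f)\,\mathbf 1_{\theta I_J}.
\end{equation*}
Substituted into the inner product, the $I_J$-piece reproduces exactly the $(I,J)$-summand of $B^{\textup{above}}(f,g)$. The $\theta I_J$-piece is the residual $(\mathbb E^\sigma_{\theta I_J}\Delta_I^\sigma f)\,\langle H_\sigma \theta I_J, \Delta_J^w g\rangle_w$. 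Since $J$ is good and $J \Subset I$, we have $\textup{dist}(J,\theta I_J) \ge \lvert J\rvert^\epsilon\lvert I\rvert^{1-\epsilon}$; combining this with the off-diagonal kernel bound of Proposition \ref{p:grad} and the zero $w$-mean of $\Delta_J^w g$ yields
\begin{equation*}
\lvert \langle H_\sigma \theta I_J, \Delta_J^w g\rangle_w \rvert \lesssim \frac{\sigma(\theta I_J)\,\lvert J\rvert}{\textup{dist}(J,\theta I_J)^2}\;w(J)^{1/2}\,\lVert \Delta_J^w g\rVert_w.
\end{equation*}
The dual nested case $I \Subset J$ extracts $B^{\textup{below}}$ with a symmetric residual, while the disjoint case (d) is estimated by the same mechanism, with the smaller of $I,J$ supplying the mean-zero cancellation and the larger supplying the Poisson-tail bound.

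The main obstacle is the orchestration of these pointwise estimates into a clean $\mathscr H$-bound. Applying Cauchy--Schwarz in $J$ extracts $\lVert g\rVert_w$ and leaves a weighted $\ell^2$-sum indexed by $I \supset J$ whose kernel, once summed over the scales $\lvert I\rvert$, collapses into the Poisson tail $P(\sigma, J)$. Goodness, through the decaying factor $(\lvert J\rvert/\lvert I\rvert)^{1-\epsilon}$, is precisely what forces geometric decay across scales and prevents logarithmic losses. A final invocation of the Poisson $A_2$ condition \eqref{e:A2} — which is exactly the quantity controlling products such as $P(\sigma,J)P(w,J)$ — delivers the bound by $\mathscr H\lVert f\rVert_\sigma\lVert g\rVert_w$. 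This is where the paper's declared \emph{quasi-orthogonality} philosophy pays off, letting the sums be reorganized without ever constructing an auxiliary Carleson measure.
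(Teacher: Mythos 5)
Your proposal follows the standard NTV-style $T1$ decomposition — diagonal/near-diagonal handled by testing and weak boundedness, sibling residuals and disjoint pairs by goodness plus Poisson-$A_2$ with geometric decay across scales — which is exactly the argument the paper itself invokes by citation to Volberg's lectures, NTV~(2004), and \S8 of the earlier LSSUT paper rather than proving in-house. Your bound on the sibling term is cruder than the sharp Poisson estimate (you replace $\textup{dist}(x,J)$ by $\textup{dist}(J,\theta I_J)$ in the denominator), but after extracting the gain $(\lvert J\rvert/\lvert I\rvert)^{1-2\epsilon}$ from goodness, the remaining per-scale sum closes with Cauchy--Schwarz and the simple $A_2$ ratio $\sqrt{\sigma(I)w(I)}/\lvert I\rvert\le\mathscr A_2^{1/2}$, so the argument is sound and consistent with the cited sources.
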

%%%%%%%%%%%%%%%%%%%%%%%%%%%%%% LEMMA LEMMA  

This is a common reduction in a proof of a $ T1$ theorem, and in the current context, it only requires  goodness of intervals and the $ A_2$ condition.  
For a proof, one can consult \cites{V,10031596}.  The Lemma is specifically phrased and proved in  this way in \cite{11082319}*{\S8}.

These definitions are needed to phrase the global to local reduction.   The following definition depends upon the essential 
energy inequality \eqref{ENG} in the next section.

%%%%%%%%%%%%%%%%%%%%%%%%%%%%%%  DEFINITION DEFINITION DEFINITION
\begin{definition}\label{d:energy} Given any  interval $ F_0$, define $ \mathcal F_{\textup{energy}} (F_0)$ to be  
	the maximal subintervals $ F \subsetneq F_0$ such that     
\begin{equation}\label{e.Estop}
 P(\sigma F_0, F) ^2 \mathsf E (w,F) ^2 w (F) > 10 C_0{\mathscr H} ^2 \sigma (F) \,, 
\end{equation}
where $ E (w,F)$ is defined in \eqref{e:Edef}, and $ C_0$ is the constant in Proposition~\ref{p:energy}.  
There holds $ \sigma (\cup \{F \::\: F\in \mathcal F (F_0)\}) \le \tfrac 1 {10} \sigma (F_0)$.  
\end{definition}
%%%%%%%%%%%%%%%%%%%%%%%%%%%%%%  DEFINITION DEFINITION DEFINITION

%%%%%%%%%%%%%%%%%%%%%%%%%%%%%%  DEFINITION DEFINITION DEFINITION
\begin{definition}\label{d:BF} Let $ I_0$ be an interval, and let $ \mathcal S$ be a collection of disjoint intervals contained in $ I_0$. 
A function $ f \in L ^2 _0 (I_0, \sigma )$ is said to be  \emph{uniform (w.r.t.\thinspace  $ \mathcal S$)} if 
these conditions are met: 
%%  ENUMERATE
\begin{enumerate}
\item  Each energy stopping interval $ F\in \mathcal F _{\textup{energy}} (I_0)$ is contained in some $ S\in \mathcal S$. 
\item The function $ f$ is constant on each interval  $ S \in \mathcal S$. 
\item  For any interval $ I \subset I_0$ which is not contained in any $ S\in \mathcal S$, 
$  \mathbb E ^{\sigma }_I \lvert f \rvert \le 1 $.  
\end{enumerate}
%% ENUMERATE
We will say that $ g$ is \emph{weakly adapted} to a function $ f$ uniform w.r.t.\thinspace $ \mathcal S$, if 
$ J\Subset S$ for some  interval $ S\in \mathcal S$ implies that $ \langle g, h ^{w} _{J} \rangle_w = 0$.   We will also say that $ g$ is \emph{weakly adapted to $ \mathcal S$.} 
\end{definition}
%%%%%%%%%%%%%%%%%%%%%%%%%%%%%%  DEFINITION DEFINITION DEFINITION

  The constant $ \mathscr L$ is defined as the best constant in the \emph{local estimate}: 
\begin{equation}  \label{e:BF}
\lvert  B ^{\textup{above}} (f,g)\rvert \le \mathscr L \{\sigma (I_0) ^{1/2} +  \lVert f\rVert_{\sigma }\} \lVert g\rVert_{w} \,, 
\end{equation}
where $ f, g$ are of mean zero on their respective spaces, supported on an interval $ I_0$. 
Moreover,  $ f$ is  uniform  and $ g$ is weakly adapted to $ f$.  
The inequality above is homogeneous in $ g$, but not $ f$, since the term $ \sigma (I_0) ^{1/2} $ is motivated by the bounded averages property of $ f$.

%%%%%%%%%%%%%%%%%%%%%%%%%%%%%% THEOREM THEOREM THEOREM
\begin{theorem}\label{t:above}[Global to Local Reduction]   There holds 
\begin{equation*}
\lvert B ^{\textup{above}} (f,g)\rvert 
\lesssim \{\mathscr H + \mathscr L\} \lVert f\rVert_{\sigma } \lVert g\rVert_{w}  \,.
\end{equation*}
The same inequality holds for the dual form $ B ^{\textup{below}} (f,g) $.
\end{theorem}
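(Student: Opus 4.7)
The plan is to build a stopping tree $\mathcal{F}\subset\mathcal{D}$ rooted at $I_0$, expand $f$ and $g$ along the tree, reduce the diagonal contribution to the local estimate \eqref{e:BF}, and control the off-diagonal contribution as a paraproduct plus error, invoking the functional energy developed in \S\ref{s.core}.

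I would construct $\mathcal{F}$ by alternating two kinds of stopping rules: Calder\'on--Zygmund stops where the $\sigma$-average $\mathbb{E}^\sigma_{F'}\lvert f\rvert$ doubles relative to the current $\mathcal{F}$-parent, and the energy stops of Definition \ref{d:energy}. Both rules yield $\sigma$-Carleson packing along $\mathcal{F}$; in particular, with stopping values $\alpha_F:=\mathbb{E}^\sigma_F\lvert f\rvert$ one has the Carleson embedding $\sum_{F\in\mathcal F}\alpha_F^2\sigma(F)\lesssim \lVert f\rVert_\sigma^2$. Decompose $f=\sum_F f_F$ where $f_F:=\sum_{I\,:\,\pi_{\mathcal F} I=F}\Delta^\sigma_I f$, and similarly $g=\sum_F g_F$. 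By construction $f_F/\alpha_F$ is uniform with respect to the children of $F$ in $\mathcal{F}$, and $g_F$ is adapted to the same collection. Now split $B^{\textup{above}}(f,g)=B^{\textup{diag}}(f,g)+B^{\textup{off}}(f,g)$, where $B^{\textup{diag}}:=\sum_F B^{\textup{above}}_F(f_F,g_F)$ collects pairs $(I,J)$ with common $\mathcal{F}$-parent and $B^{\textup{off}}$ collects pairs with $\pi_{\mathcal F} J\subsetneq \pi_{\mathcal F} I$.

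For $B^{\textup{diag}}$ I would apply \eqref{e:BF} on each $F$ separately to get $\lvert B^{\textup{above}}_F(f_F,g_F)\rvert\le \mathscr L\bigl(\alpha_F\sigma(F)^{1/2}+\lVert f_F\rVert_\sigma\bigr)\lVert g_F\rVert_w$; Cauchy--Schwarz, Haar orthogonality ($\sum_F\lVert f_F\rVert_\sigma^2=\lVert f\rVert_\sigma^2$, $\sum_F\lVert g_F\rVert_w^2\le\lVert g\rVert_w^2$), and the Carleson embedding then yield $\lvert B^{\textup{diag}}\rvert\le \mathscr L\lVert f\rVert_\sigma\lVert g\rVert_w$. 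For $B^{\textup{off}}$, fix $J$ with $F:=\pi_{\mathcal F} J$; the constraint $\pi_{\mathcal F} I\supsetneq F$ forces $F\subsetneq I$, so the child $I_J$ of $I$ containing $J$ also contains $F$. Split $I_J=F\cup(I_J\setminus F)$. The $F$-part, summed over admissible $I$, telescopes via $\sum_{I\,:\,F\subsetneq I\subset I_0}\mathbb E^\sigma_J\Delta^\sigma_I f=\mathbb E^\sigma_F f$ to a paraproduct $\sum_F(\mathbb E^\sigma_F f)\,\langle H_\sigma F,\,P^w_F g\rangle_w$ with $P^w_F g:=\sum_{J\,:\,\pi_{\mathcal F} J=F}\Delta^w_J g$, which I would estimate by the interval testing constant together with the functional energy inequality of \S\ref{s.core}, contributing $\mathscr H\lVert f\rVert_\sigma\lVert g\rVert_w$. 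The complementary piece involving $I_J\setminus F$ is handled by Proposition \ref{p:grad}: the kernel difference estimate extracts a Poisson-type factor $P(\mathbf 1_{I_J\setminus F}\sigma, J)$ which, combined with the energy stopping inequality \eqref{e.Estop} and the $A_2$ bound, also yields $\mathscr H\lVert f\rVert_\sigma\lVert g\rVert_w$. The dual form $B^{\textup{below}}$ is then treated symmetrically by interchanging the roles of $\sigma$ and $w$.

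The main obstacle is the paraproduct half of the off-diagonal estimate: the bound $\sum_F(\mathbb E^\sigma_F f)\,\langle H_\sigma F,\,P^w_F g\rangle_w\lesssim \mathscr H\lVert f\rVert_\sigma\lVert g\rVert_w$ genuinely requires the functional energy inequality, and is the reason \S\ref{s.core} is developed. The $J\Subset I$ restriction introduced by the definition of $B^{\textup{above}}$ also produces $O(r)$ near-scale corrections in the telescoping for each $J$, but these amount to a bounded linear combination of Haar coefficients and are absorbed by standard orthogonality together with the $A_2$ bound.
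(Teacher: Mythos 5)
Your proof takes the same overall route as the paper: construct Calder\'on--Zygmund stopping data $\mathcal F$ that also incorporates the energy stopping intervals (as in Lemma~\ref{l:CZ}), split $B^{\textup{above}}$ into a diagonal sum (pairs $(I,J)$ with a common $\mathcal F$-parent) and an off-diagonal sum (this is Lemma~\ref{l:aboveCorona}), bound the diagonal corona-by-corona using the local estimate \eqref{e:BF} and quasi-orthogonality, and for the off-diagonal split the argument of the Hilbert transform as $I_F = F \cup (I_F\setminus F)$. However, you have the roles of the elementary tool and the deep tool reversed on the two halves of the off-diagonal. The $F$-part paraproduct $\sum_F (\mathbb E^\sigma_F f)\,\langle H_\sigma F, P^w_F g\rangle_w$ does \emph{not} require functional energy: since $P^w_F g$ is supported in $F$, Cauchy--Schwarz in $L^2(F,w)$, the interval testing bound on $\lVert H(\sigma \mathbf 1_F)\rVert_{L^2(F,w)}$, and the quasi-orthogonality estimate \eqref{e:quasi} already give $\mathscr T\lVert f\rVert_\sigma\lVert g\rVert_w$. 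Indeed the functional energy inequality \eqref{e.funcEnergy} is phrased as a Poisson pairing and does not directly apply to the Hilbert transform pairing in this paraproduct.

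The place where functional energy is genuinely indispensable --- and the reason \S\ref{s.core} exists --- is precisely the complementary piece with argument $I_F\setminus F$, which you claim to close with Proposition~\ref{p:grad}, the energy stopping inequality \eqref{e.Estop}, and $A_2$. That would not close. After extracting the Poisson factors via the monotonicity property and bounding the summed coefficients pointwise by $\Phi = \sum_{F'\in\mathcal F}\alpha_f(F')\cdot F'$, one is left with $\sum_{F}\sum_{J\in\mathcal J^*(F)}P(\Phi\sigma, J)\,\langle x/\lvert J\rvert,\, g_F J\rangle_w$, a genuinely multi-scale sum over a non-partitioning family of $J$'s weighted by the non-trivial measure $\Phi\sigma$. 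The single-scale energy stopping \eqref{e.Estop} controls only one generation of $\mathcal F$-children, and combining Poisson factors with $A_2$ naively leads to a pivotal-type hypothesis, which is known not to be implied by \eqref{e:A2} and \eqref{e:testing}. The two-weight Poisson argument of \S\ref{s.core} (Theorem~\ref{t.functionalEnergy} and Corollary~\ref{c.fe}) is what makes this piece go through; you need to move it from the $F$-paraproduct half over to the $I_F\setminus F$ half.
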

%%%%%%%%%%%%%%%%%%%%%%%%%%%%%% THEOREM THEOREM THEOREM

A reduction of this type is a familiar aspect of many proofs of a $ T1$ theorem, proved by exploiting standard off-diagonal estimates for Calder\'on--Zygmund kernels, but in the current setting, it is a much deeper fact, a consequence of the \emph{functional energy inequality} of \S\ref{s.core}.	
We make the following construction for an  $ f \in L ^{2 } (I^0, \sigma )$,   of $ \sigma $-integral zero. 
Add $ I^0$ to $ \mathcal F$, and set $ \alpha _{f} (I^0) := \mathbb E ^{\sigma } _{I^0} \lvert  f\rvert $.  
In the inductive stage, if $ F\in \mathcal F$ is minimal, add to $ \mathcal F$ those maximal descendants $ F'$ of 
$ F$ such that $ F'\in \mathcal F _{\textup{energy}} (F)$ or $ \mathbb E ^{\sigma } _{F'} \lvert  f\rvert \ge 10 \alpha _{f} (F)  $. 
Then define 
\begin{equation*}
\alpha _{f} (F') := 
\begin{cases}
\alpha _{f} (F)   &  \mathbb E ^{\sigma } _{F'} \lvert  f\rvert < 2\alpha _{f} (F) 
\\
 \mathbb E ^{\sigma } _{F'} \lvert  f\rvert  & \textup{otherwise}
\end{cases}
\end{equation*}
If there are no such intervals $ F'$, the construction stops.  
We refer to $ \mathcal F$ and $ \alpha _{f} ( \cdot )$
 as  \emph{Calder\'on--Zygmund stopping data for $ f$}, following the terminology of 
\cite{11082319}*{Def 3.5}.  Their key properties  are collected here. 
 
 %%%%%%%%%%%%%%%%%%%%%%%%%%%%%% LEMMA LEMMA LEMMA
\begin{lemma}\label{l:CZ}  For $ \mathcal F$ and $ \alpha _{f} ( \cdot )$ as defined above, there holds 
%%  ENUMERATE
\begin{enumerate}

\item  $ I_0$ is the maximal element of $ \mathcal F$.

\item  For all $ I\in \mathcal D$, $ I\subset I^0$,  we have $   \mathbb E _{I} ^{\sigma } \lvert f \rvert  \le  10 \alpha _{f} (\pi _{\mathcal F} I) $. 

\item  $ \alpha _{f}$ is monotonic: If $ F, F'\in \mathcal F$ and $ F\subset F'$ then $ \alpha _{f} (F)\ge \alpha _{f} (F')$.  

\item  The collection $ \mathcal F$ is $ \sigma $-Carleson in that 
\begin{equation} \label{e:sCarleson}
	\sum_{F\in \mathcal{F}:\ F\subset S}\sigma (F) \leq   2\sigma (S), \qquad S\in \mathcal{D}.
\end{equation}

\item We have the inequality 
	\begin{equation} \label{e:Quasi}
\Bigl\lVert 
\sum_{F\in \mathcal F} \alpha _{f} (F) \cdot F
 \Bigr\rVert_{\sigma } \lesssim    \lVert f\rVert_{ \sigma }  \,. 
\end{equation}

\end{enumerate}
%% ENUMERATE

\end{lemma}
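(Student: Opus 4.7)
The plan is to verify the five properties in the order stated, drawing only on the inductive construction of $\mathcal F$ and $\alpha_f$ together with the measure bound built into Definition \ref{d:energy}.

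Property (1) is immediate from the base step. For (2), I would fix $I\subset I_0$ and set $F:=\pi_{\mathcal F}I$. If $I\in\mathcal F$, then $\pi_{\mathcal F}I=I$ and the case analysis defining $\alpha_f(I)$ yields $\mathbb E_I^\sigma|f|\le 2\alpha_f(I)$ directly. If $I\notin\mathcal F$, I would argue by contradiction: were $\mathbb E_I^\sigma|f|>10\alpha_f(F)$, the maximal dyadic ancestor of $I$ properly inside $F$ obeying the same inequality would have been added to $\mathcal F$ at the inductive step starting from $F$, violating the minimality of $F$ as the $\mathcal F$-interval containing $I$. Property (3) is a single-step check: both branches of the definition give $\alpha_f(F')\geq\alpha_f(F)$, and chaining along nested $\mathcal F$-descents completes the monotonicity.

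For (4), I would split the immediate $\mathcal F$-children $\mathcal F^*(F)$ of a fixed $F$ into energy-stopping children (contributing at most $\tfrac1{10}\sigma(F)$ by the last line of Definition \ref{d:energy}) and Calder\'on--Zygmund children, i.e.\ those $F'\in\mathcal F^*(F)$ with $\mathbb E_{F'}^\sigma|f|\geq 10\alpha_f(F)$. For the latter, using disjointness and the inequality $\mathbb E_F^\sigma|f|\le 2\alpha_f(F)$ built into the two cases of the construction,
\[
10\,\alpha_f(F)\sum_{F'}\sigma(F')\le\sum_{F'}\int_{F'}|f|\,d\sigma\le\int_F|f|\,d\sigma\le 2\alpha_f(F)\,\sigma(F),
\]
so the CZ children contribute at most $\tfrac15\sigma(F)$. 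The combined one-step decay $\sum_{F'\in\mathcal F^*(F)}\sigma(F')\le\tfrac3{10}\sigma(F)$ iterates geometrically over $\mathcal F$-generations beneath any $S\in\mathcal D$ to yield \eqref{e:sCarleson} with constant $2$.

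The quasi-orthogonality estimate (5) is the main obstacle, and I would establish it by duality paired with two invocations of the dyadic Carleson embedding theorem. Testing against $h\in L^2(\sigma)$ and applying Cauchy--Schwarz,
\[
\Bigl\langle\textstyle\sum_F\alpha_f(F)\cdot F,\,h\Bigr\rangle_\sigma
=\sum_F\alpha_f(F)\,\mathbb E_F^\sigma h\,\sigma(F)
\le\Bigl(\sum_F\alpha_f(F)^2\sigma(F)\Bigr)^{1/2}\Bigl(\sum_F(\mathbb E_F^\sigma h)^2\sigma(F)\Bigr)^{1/2}.
\]
The second factor is bounded by $\|h\|_\sigma$ by Carleson embedding, whose hypothesis is exactly property (4). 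For the first factor, I would exploit the identity $\alpha_f(F)=\mathbb E_{F^*}^\sigma|f|$ with $F^*$ the nearest CZ-stopping $\mathcal F$-ancestor of $F$ (or $F$ itself), together with the observation that the sub-collection $\{F\in\mathcal F:\pi^{CZ}F=F^*\}$ inherits the $\sigma$-Carleson property from (4), to collapse the sum to $\sum_{F^*\in\mathcal F^{CZ}}(\mathbb E_{F^*}^\sigma|f|)^2\sigma(F^*)\lesssim\|f\|_\sigma^2$, again by Carleson embedding applied to the CZ sub-collection. The delicacy at this last step---disentangling the non-monotone $\alpha_f$-behavior across energy-only and CZ-stopping intervals so that Carleson embedding applies cleanly---is what makes (5) the crux of the lemma.
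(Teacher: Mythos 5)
Your treatment of (1)--(4) coincides with the paper's: the first three are immediate from the construction, and for (4) you spell out the geometric iteration that the paper leaves implicit; your one-step estimate $\tfrac1{10}+\tfrac15=\tfrac3{10}$ (after reducing to $S\in\mathcal F$, a reduction you should also state) is exactly what the paper records. The genuine divergence is in (5). You argue by duality and two applications of dyadic Carleson embedding: Cauchy--Schwarz in $F$ splits the pairing into $\sum_F\alpha_f(F)^2\sigma(F)$ and $\sum_F(\mathbb E_F^\sigma h)^2\sigma(F)$, the latter controlled by (4) via Carleson embedding, the former collapsed onto the reset subcollection and then bounded by a second Carleson embedding. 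The paper instead works with the squared norm directly: it introduces $\Phi_G=\sum_{F:\,\pi_{\mathcal G}F=G}F$, passes to the level sets $G_k=\{\Phi_G\ge 2^k\}$ using the third-moment consequence $\sigma(G_k)\lesssim 2^{-3k}\sigma(G)$ of (4), applies Cauchy--Schwarz in $k$, and exploits the pointwise super-geometric behavior of the summands $\alpha_f(G)2^k\mathbf 1_{G_k}(x)$ to bring the square inside, before comparing with the maximal function. Both rest on (4) and on the $L^2(\sigma)$-boundedness of the dyadic maximal function; yours is more modular, outsourcing both estimates to the same embedding theorem, while the paper's is self-contained at the cost of the level-set and moment machinery. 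One imprecision you should repair: your $F^*$ must be the nearest $\mathcal F$-ancestor at which the stopping \emph{value} was reset (the paper's $\mathcal G$), not the nearest Calder\'on--Zygmund stopping ancestor. An energy-stopping interval $F'$ with $2\alpha_f(\pi_{\mathcal F}F')\le\mathbb E_{F'}^\sigma|f|<10\alpha_f(\pi_{\mathcal F}F')$ also resets the value, so below $F'$ one has $\alpha_f=\mathbb E_{F'}^\sigma|f|$ rather than the average over any CZ ancestor; with $\mathcal G$ in place of your $\mathcal F^{CZ}$, the identity $\alpha_f(F)=\mathbb E_{\pi_{\mathcal G}F}^\sigma|f|$ holds, $\mathcal G\subset\mathcal F$ inherits the Carleson property, and the rest of your argument goes through unchanged.
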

%%%%%%%%%%%%%%%%%%%%%%%%%%%%%% LEMMA LEMMA LEMMA

%%%%%%%%%%%%%%%%%%%%%%%%%%%%%% PROOF PROOF PROOF
\begin{proof}
The first three properties are immediate from the construction.  The fourth, the $ \sigma $-Carleson property is seen this way. 
It suffices to check the property for $ S\in \mathcal F$. 
Now, the $ \mathcal F$-children can be in  $ \mathcal F _{\textup{energy}} (S)$, which satisfy 
\begin{equation*}
\sum_{F'\in  \mathcal F _{\textup{energy}} (S)} \sigma (F') \le \tfrac 1 {10} \sigma (S) \,. 
\end{equation*}
Otherwise,  note that by choice of $ \alpha _{f} ( \cdot )$, we have $ \mathbb E _{S} ^{\sigma } \lvert  f\rvert  \le 2\alpha _{f} (S) $. 
These intervals $ F'$,  satisfy $ \mathbb E ^{\sigma } _{F'} \lvert  f\rvert \ge 10 \alpha _{f} (S) \ge 5\mathbb E ^{\sigma } _{S} \lvert  f\rvert  $. 
These  intervals satisfy  the display above with $ \frac 1 {10} $ replaced by $ \tfrac 1 5$.   Hence, \eqref{e:sCarleson} holds.  

For the final property,  let $ \mathcal G \subset \mathcal F$ be the subset at which the stopping values change: 
If $ F \in \mathcal F - \mathcal G$, and $ G$ is the $ \mathcal G$-parent of $ F$, then $ \alpha _{f} (F)= \alpha _{f} (G)$.  
Set 
\begin{equation*}
\Phi _{G} := \sum_{F\in \mathcal F \::\: \pi _{\mathcal G} F=G} F \,. 
\end{equation*}
Define $ G_k := \{ \Phi _G \ge 2 ^{k}\}$, for $ k=0, 1 ,\dotsc $.  The $ \sigma $-Carleson property implies  
integrability of all orders in $ \sigma $-measure of $ \Phi _G$. 
Using the third moment, we have $ \sigma (G_k) \lesssim 2 ^{-3k} \sigma (G)$.  
Then, estimate 
\begin{align*}
\Bigl\lVert 
\sum_{F\in \mathcal F} \alpha _{f} (F) \cdot F
 \Bigr\rVert_{\sigma } ^2 
 &=
 \Bigl\lVert 
\sum_{G\in \mathcal G} \alpha _{f} (G) \Phi _G 
 \Bigr\rVert_{\sigma } ^2
 \\ &\le 
  \Bigl\lVert 
\sum_{k=0} ^{\infty }  (k+1) ^{+1-1}\sum_{G\in \mathcal G} \alpha _{f} (G) 2 ^{k} \mathbf 1_{G_k} 
 \Bigr\rVert_{\sigma } ^2
 \\
 & \stackrel \ast \lesssim 
\sum_{k=0} ^{\infty } 
(k+1) ^2  \Bigl\lVert 
\sum_{G\in \mathcal G} \alpha _{f} (G) 2 ^{k} \mathbf 1_{G_k} (x)
 \Bigr\rVert_{\sigma } ^2
\\
 &\stackrel {\ast \ast  } \lesssim 
\sum_{k=0} ^{\infty } 
(k+1) ^2   
\sum_{G\in \mathcal G} \alpha _{f} (G) ^2  2 ^{2k} \sigma (G_k)  
\\
& \lesssim \sum_{G\in \mathcal G} \alpha _{f} (G) ^2  \sigma (G)   \lesssim \lVert M f\rVert_{\sigma } ^2 \lesssim \lVert f\rVert_{\sigma } ^2 \,. 
\end{align*}
Note that we have used Cauchy--Schwarz in $ k$ at the step marked by an $ \ast $. 
In the step marked with $ \ast \ast $, for each point $ x$, the non-zero summands are a (super)-geometric sequence of scalars, so the square can be moved inside the sum. 
Finally, we use the estimate on the $ \sigma $-measure of $ G_k$, and compare to the maximal function $ M f$ to complete the estimate. 

\end{proof}
%%%%%%%%%%%%%%%%%%%%%%%%%%%%%% PROOF PROOF PROOF

We will  use the notation 
\begin{equation*}
 P ^{\sigma } _{F} f := \sum_{ I \in \mathcal D \::\: \pi _{\mathcal F}I=F} \Delta ^{\sigma } _{I} f\,, \qquad F\in \mathcal F\,. 
\end{equation*} 
and similarly for  $ Q ^{w} _{F}$, but rather than use $ \pi _{\mathcal F} J$, in the definition, 
we use $ \dot \pi _{\mathcal F} J $, defined to be the minimal $ F\in \mathcal F$ with $ J\Subset F$. 
Without this alternate definition, some delicate case analysis would be forced upon us. 
The inequality \eqref{e:Quasi} allows us to estimate  
\begin{align}  \label{e:quasi} 
	\begin{split}
		\sum_{F\in \mathcal F} 
	\{ \alpha _{f} (F) \sigma (F) ^{1/2}& + \lVert P ^{\sigma } _{F} f\rVert_{\sigma }\} 
	\lVert Q ^{w} _{F} g\rVert_{w} 
\\	& \le 
	\Biggl[
	\sum_{F\in \mathcal F} 
	\{\alpha _{f} (F) ^2  \sigma (F) + \lVert P ^{\sigma } _{F} f\rVert_{\sigma } ^2 \} 
	\times 
	\sum_{F\in \mathcal F} 
	\lVert Q ^{w} _{F} g\rVert_{w} ^2 
	\Biggr] ^{1/2}   \lesssim   \lVert f\rVert_{\sigma } \lVert g\rVert_{w} \,. 
	\end{split}
\end{align}
We will refer to this as the \emph{quasi-orthogonality}  argument, and we remark that it only requires orthogonality 
of the projections $ Q ^{w} _{F} g$. 
It is very useful.

%%%%%%%%%%%%%%%%%%%%%%%%%%%%%% THEOREM THEOREM THEOREM
\begin{lemma}\label{l:aboveCorona}
There holds 
\begin{gather*}
\bigl\lvert  B ^{\textup{above}} (f,g) - 	B ^{\textup{above}} _{\mathcal F} (f,g) \bigr\rvert \lesssim \mathscr H 
\lVert f\rVert_{\sigma } \lVert g\rVert_{w} \,, 
\\
\textup{where } \quad 
	B ^{\textup{above}} _{\mathcal F} (f,g) := \sum_{F\in \mathcal F}  B ^{\textup{above}} (P ^{\sigma } _{F}f, Q ^{w} _{F}g)\,. 
\end{gather*}
\end{lemma}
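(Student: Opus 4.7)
The plan is to identify $B^{\textup{above}}(f,g) - B^{\textup{above}}_{\mathcal F}(f,g)$ as the sum over ``cross-corona'' pairs: those $(I,J)$ with $J\Subset I\subset I_0$ for which $\pi_{\mathcal F}J\neq \pi_{\mathcal F}I$. Since $J\subset I$ forces $\pi_{\mathcal F}J\subseteq \pi_{\mathcal F}I$, this happens exactly when $F:=\pi_{\mathcal F}J\subsetneq \pi_{\mathcal F}I$ for some $F\in \mathcal F$ with $F\subsetneq I$; in such pairs $J\subset F$, and the child $I_J=I_F$ of $I$ contains both $J$ and $F$. Reindexing by $F$ and exchanging the order of summation gives
\begin{equation*}
  \sum_{F\in \mathcal F,\ F\subsetneq I_0}\ \sum_{I\supsetneq F}\mathbb E^\sigma_{I_F}\Delta^\sigma_I f \cdot \Bigl\langle H(\sigma\mathbf 1_{I_F}),\ \sum_{\substack{J:\ \pi_{\mathcal F}J=F \\ J\Subset I}}\Delta^w_J g\Bigr\rangle_w.
\end{equation*}
I would then split $\mathbf 1_{I_F}=\mathbf 1_F+\mathbf 1_{I_F\setminus F}$. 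Because $\Delta^\sigma_I f$ is constant on the child $I_F$, telescoping the $I$-sum yields $(\mathbb E^\sigma_F f)\mathbf 1_F(x)+\psi_F(x)$, where $\psi_F$ is supported in $I_0\setminus F$ and satisfies $\psi_F(x)=\mathbb E^\sigma_{I_x}f$ at $x\notin F$ (with $I_x$ the smallest dyadic interval containing both $x$ and $F$). The constraint $J\Subset I$ is automatic whenever $|F|\geq 2^{r-1}|J|$; the $O(r)$ remaining scales produce a neighbor correction controlled directly by the Poisson $A_2$ condition.

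The $\mathbf 1_F$ contribution is the local paraproduct
\begin{equation*}
  \sum_{F\in \mathcal F}(\mathbb E^\sigma_F f)\,\langle H(\sigma\mathbf 1_F),\ Q^w_F g\rangle_w,
\end{equation*}
which is bounded via $|\mathbb E^\sigma_F f|\leq 10\alpha_f(F)$ (Lemma~\ref{l:CZ}(2)), Cauchy--Schwarz, the interval testing inequality \eqref{e:testing}, and the quasi-orthogonality \eqref{e:quasi}, producing a contribution $\lesssim \mathscr T\|f\|_\sigma\|g\|_w$. The $\mathbf 1_{I_F\setminus F}$ contribution is the tail $\sum_F\langle H(\sigma\psi_F),Q^w_F g\rangle_w$. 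Using Lemma~\ref{l:CZ}(2)--(3) one sees $|\psi_F|\leq 10\alpha_f(F)$. For each $J$ with $\pi_{\mathcal F}J=F$, the $w$-mean-zero property of $\Delta^w_J g$ combined with the gradient estimate of Proposition~\ref{p:grad}---valid because goodness of $J$ yields $\textup{dist}(J,\partial F)\geq |J|^\epsilon|F|^{1-\epsilon}$ and thus separates $J$ from $\textup{supp}\,\psi_F\subset I_0\setminus F$---produces
\begin{equation*}
  |\langle H(\sigma\psi_F),\Delta^w_J g\rangle_w| \lesssim \alpha_f(F)\cdot P(\sigma\mathbf 1_{I_0\setminus F},J)\cdot w(J)^{1/2}|\hat g(J)|.
\end{equation*}
Summing in $J$ by Cauchy--Schwarz, the standard Poisson scaling estimate for good $J\subset F$ together with the energy stopping inequality \eqref{e.Estop} applied at the $J$-scales within the $F$-corona yields the bound $\alpha_f(F)\,\mathscr H\,\sigma(F)^{1/2}\|Q^w_F g\|_w$, and a final application of \eqref{e:quasi} gives the desired $\lesssim \mathscr H\|f\|_\sigma\|g\|_w$.

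The main obstacle is the tail estimate. The essential subtlety is that the energy inequality must be invoked at the $J$-scales strictly inside the $F$-corona rather than at $F$ itself, because $F\in \mathcal F$ may be an energy-stopping interval relative to its $\mathcal F$-parent---in which case \eqref{e.Estop} holds in the reverse direction at $F$. The goodness hypothesis is doing double duty here: it validates the gradient bound of Proposition~\ref{p:grad} and simultaneously supplies the favorable Poisson scaling that collapses the $J$-sum to $\mathscr H^2\sigma(F)$ via the energy stopping, so that quasi-orthogonality can finish the argument.
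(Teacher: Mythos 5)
Your reduction to cross-corona pairs, the reindexing by $F$, and the split $\mathbf 1_{I_F}=\mathbf 1_F+\mathbf 1_{I_F\setminus F}$ all match the paper (these are exactly the steps carried out in the proof of Corollary~\ref{c.fe} and Lemma~\ref{l:}), and your treatment of the $\mathbf 1_F$ paraproduct term by interval testing plus quasi-orthogonality is correct. The genuine gap is in the tail term. After the monotonicity/gradient estimate you arrive, for each fixed $F$, at $\alpha_f(F)\sum_{J:\pi_{\mathcal F}J=F}P(\sigma\mathbf 1_{I_0\setminus F},J)\,w(J)^{1/2}\lvert\hat g(J)\rvert$, and you claim this is $\lesssim \alpha_f(F)\,\mathscr H\,\sigma(F)^{1/2}\lVert Q^w_F g\rVert_w$ via Cauchy--Schwarz, ``Poisson scaling,'' and the energy stopping condition \eqref{e.Estop}. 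But after Cauchy--Schwarz what you need is $\sum_J P(\sigma\mathbf 1_{I_0\setminus F},J)^2 w(J)\lesssim \mathscr H^2\sigma(F)$, and neither the plain energy inequality \eqref{ENG} nor the complement of \eqref{e.Estop} delivers this: both involve the Poisson term $P(\sigma\mathbf 1_F,J)$ with $\sigma$ restricted to the \emph{interior} of the corona, while your Poisson term has $\sigma$ supported on the \emph{complement} $I_0\setminus F$, whose tails span many coronas. Applying \eqref{ENG} with partition inside $F$ and $\sigma$ replaced by $\sigma\mathbf 1_{I_0}$ would produce $\sigma(I_0)$ on the right, which does not sum over $F$.

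This ``Poisson with holes'' estimate is precisely the content of the functional energy inequality (Theorem~\ref{t.functionalEnergy} and \eqref{RTS}), which is the deep technical input: it is proved by dualizing to a genuine two-weight Poisson inequality, verifying the two Poisson testing conditions, and crucially invoking the Hilbert transform testing hypothesis once one has to pass from Poisson back to $H$. The paper also retains the sharper pointwise bound $\lvert\psi_F\rvert\lesssim \Phi=\sum_{F'}\alpha_f(F')\mathbf 1_{F'}$ (rather than the coarser constant $\alpha_f(F)$) so that $\Phi$ can serve as the $L^2(\sigma)$-bounded density $h$ in \eqref{e.funcEnergy}; that finer structure is what lets the sum over $F$ close. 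In short, your proposal replaces the functional energy inequality by the single-scale energy inequality, and that substitution does not work. A secondary, smaller issue is that the bridge between $J\subset I$ and $J\Subset I$ (Cases~B and~C in the paper) is not merely ``controlled directly by the Poisson $A_2$ condition'' --- the paper needs an additional interval testing and energy argument across the $O(r)$ borderline scales --- but that is a fixable detail; the missing functional energy input is the essential obstacle.
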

%%%%%%%%%%%%%%%%%%%%%%%%%%%%%% THEOREM THEOREM THEOREM

%%%%%%%%%%%%%%%%%%%%%%%%%%%%%% PROOF PROOF PROOF
\begin{proof}  We apply functional energy, of \S\ref{s.core}.  Observe that $ f = \sum_{F\in \mathcal F} P ^{\sigma } _{F} f$, and 
\begin{equation*}
\sum_{J \::\: J\Subset I_0} \Delta ^{w} _{J} g = \sum_{F\in \mathcal F} Q ^{w} _{F} g.  
\end{equation*}
From the definition of $ B ^{\textup{above}} (f,g)$, we can assume that $ g$ equals the sum above. Therefore, 
	\begin{align*}
		 B ^{\textup{above}} (f,g)& = 
		 \sum_{F'\in \mathcal F} \sum_{F\in \mathcal F}   B ^{\textup{above}} (P ^{\sigma } _{F'}f, Q ^{w} _{F}g). 
		 \end{align*}
In the sum above, we can also add the restriction that $ F'\cap F\neq \emptyset $, for otherwise $ B ^{\textup{above}} (P ^{\sigma } _{F'}f, Q ^{w} _{F}g)=0$. 
For a pair of intervals $ J\Subset I_J$, note that this implies that $ J\Subset \pi _{\mathcal F} I$, 
that is $ \dot \pi _{\mathcal F} J\subset \pi _{\mathcal F}I$.  
	 Therefore, we can add the restriction $ F\subset F'$. 
	 The case of $ F'=F$ is 
	 the definition of $ 	B ^{\textup{above}} _{\mathcal F} (f,g) $, so that it suffices to estimate 
\begin{equation}  \label{e:BB}
\sum_{\substack{F,F'\in \mathcal F\\ F'\supsetneqq F }}  B ^{\textup{above}} (P ^{\sigma } _{F'}f, Q ^{w} _{F}g)\,. 
\end{equation}

Observe that the functions 
$
g_F := Q ^{w} _{F}g 
$ 
are $ \mathcal F$ adapted in the sense of  Definition~\ref{F adapted}, and by construction $ \mathcal F$ satisfies 
the Carleson measure condition \eqref{e:sCarleson}.  
We take these steps  to apply functional energy inequality. 
 The argument of the Hilbert transform is $ I_F$, the child of $ I$ that contains $ F$. 
	Write $ I_F= F + (I_F-F)$, and use linearity of $ H _{\sigma }$.  Note that by the standard martingale difference identity and 
	the construction of stopping data, 
\begin{equation*}
	\Bigl\lvert \sum_{I \;:\; I\supsetneq  F} \mathbb E ^{\sigma } _{I _{F}} \Delta ^{\sigma } _{I} f  \Bigr\rvert \lesssim \alpha _{f} (F)\,, 
	\qquad F\in \mathcal F \,. 
\end{equation*}
Hence, invoking interval testing, 
\begin{align*}
	\Bigl\lvert \sum_{F\in \mathcal F} 
		\sum_{I \;:\; I\supsetneq  F} \mathbb E ^{\sigma } _{I _{F}} \Delta ^{\sigma } _{I} f \cdot 
		\langle H _{\sigma } F, g_F \rangle_w \Bigr\rvert & \lesssim
		\sum_{F\in \mathcal F}  \alpha _{f} (F) 
		\bigl\lvert \langle H _{\sigma } F, g_F \rangle_w \bigr\rvert
		\\
		& \lesssim \mathscr  H 	\sum_{F\in \mathcal F}  \alpha _{f} (F) \sigma (F) ^{1/2} \lVert g_F\rVert_{w} \,.
\end{align*}
 Quasi-orthogonality bounds this last expression. 

For the second expression, when the argument of the Hilbert transform is $ I_F - F$, first note that 
\begin{equation*}
\Bigl\lvert 
\sum_{I \;:\; I\supsetneq  F} \mathbb E ^{\sigma } _{I _{F}} \Delta ^{\sigma } _{I} f \cdot  (I_F - F)
\Bigr\rvert \lesssim \Phi := \sum_{F'\in \mathcal F} \alpha _{f} (F') \cdot F' \,, \qquad F\in \mathcal F \,. 
\end{equation*}
Therefore, by the definition of $ \mathcal F$-adapted, the monotonicity property \eqref{e.mono1} applies, and yields  
\begin{equation*}
\Bigl\lvert 
\sum_{I \;:\; I\supsetneq  F} \mathbb E ^{\sigma } _{I _{F}} \Delta ^{\sigma } _{I} f \cdot \langle H _{\sigma } (I_F - F),  g_F\rangle _{w}
\Bigr\rvert \lesssim \sum_{J\in \mathcal J ^{\ast} (F)}P (\Phi \sigma , J) 
\Bigl\langle  \frac {x} {\lvert  J\rvert } , J  \overline  g_F\Bigr\rangle_{w} \,, \qquad F\in \mathcal F\,. 
\end{equation*}
Here,  $ \mathcal J ^{\ast} (F)$ are the maximal good intervals $ J\Subset F$, and 
$ \overline g  _F := \sum_{J\in \mathcal J (F) \::\: J\Subset F} \lvert  \hat g (J) \rvert \cdot h ^{w} _{J} $, so that every term has a positive inner product with $ x$.  
The sum over $ F\in \mathcal F$ of this last expression is controlled by functional energy, and the property 
that $ \lVert \Phi \rVert_{\sigma } \lesssim \lVert f\rVert_{\sigma }$.  This completes the bound for \eqref{e:BB}.  

\end{proof}
%%%%%%%%%%%%%%%%%%%%%%%%%%%%%% PROOF PROOF PROOF

%%%%%%%%%%%%%%%%%%%%%%%%%%%%%% PROOF PROOF PROOF
\begin{proof}[Proof of Theorem~\ref{t:above}] 
By Lemma~\ref{l:aboveCorona},  it remains to control $ B _{\mathcal F} ^{\textup{above}} (f,g)$. 
Keeping the quasi-orthogonality argument in mind, we see that   appropriate control on the individual summands is enough to control it.
For each $ F\in \mathcal F$, let $ \mathcal S _{F}$ be the $ \mathcal F$-children of $ F$.   Observe that the function 
\begin{equation} \label{e:adap}
(C \alpha _{f} (F)) ^{-1} P ^{\sigma }_F f 
\end{equation}
is uniform on  $ F$ w.r.t.\thinspace $ \mathcal S_F$, for appropriate absolute constant $ C$. 
Moreover, the function $ Q ^{w} _{F}g$ does not have any interval $ J$ in its Haar support strongly contained in an interval $S\in \mathcal S_F $. 
That is, it is weakly adapted to the function in \eqref{e:adap}.  
Therefore, by assumption, 
\begin{equation*}
\lvert   B ^{\textup{above}} (P ^{\sigma } _{F}f, Q ^{w} _{F}g)\rvert 
\le \mathscr L \{\alpha _{F} (F) \sigma (F) ^{1/2} + \lVert P ^{\sigma }_F f\rVert_{\sigma }\} \lVert Q ^{w} _F g\rVert_{w}  \,. 
\end{equation*}
The sum over $ F\in \mathcal F$ of the right hand side is bounded by the quasi-orthogonality argument of \eqref{e:quasi}. 

\end{proof}
%%%%%%%%%%%%%%%%%%%%%%%%%%%%%% PROOF PROOF PROOF

%%%%%%%%%%%%%%%%%%%%%% 
\section{Energy, Monotonicity, and Poisson} \label{s.prelim}

Our Theorem is particular to the Hilbert transform, and so depends upon 
special properties of it.  They largely extend from the 
fact that the derivative of $-1/y $ is positive.  
The following Monotonicity Property for the Hilbert transform  was observed in \cite{11082319}*{Lemma 5.8}, 
and is basic to the analysis of the functional energy inequality.   

\begin{lemma}[Monotonicity Property]
\label{mono}   Let $ K \supsetneq I$ be two intervals, and assume that $ \sigma $ does not have point masses at the end point of $ I$. 
Then, for any function $ g\in L ^2 (I, w)$, with $ w$-integral zero,  and $ \beta > 2 \lvert  K\rvert $, 
\begin{equation}\label{e:P<H}
P (\sigma \cdot (K-I), I) \bigl\langle \frac {x} {\lvert  I\rvert }, \overline g\bigr\rangle_{w} 
\lesssim \liminf _{\alpha \downarrow 0} 
\langle H _{\alpha , \beta } (\sigma (K-I)),  \overline  g\rangle _{w} . 
\end{equation}
Here, $ \overline g = \sum_{J'} \lvert \widehat g (J') \rvert h ^{w} _{J'}$ is a Haar multiplier applied to $ g$. 
If $ J$ is a good interval, $ J\Subset I$, then,  
for   function $ g \in L ^2  (J, w)$, with $ w$-integral zero, and signed measures $ \nu $ and $ \mu $ supported on 
$ K- I$, with $ \lvert  \nu \rvert \le \mu  $, 
it holds that 
\begin{equation} \label{e.mono1} 
	\sup _{0 < \alpha < \beta }\lvert  \langle H _{\alpha , \beta } \nu    , g \rangle _{w }\rvert
	\lesssim  
	P( \mu, J )
	\bigl\langle\frac  x { \vert J \vert }, \overline  g \bigr\rangle _{w }.
\end{equation}  
\end{lemma}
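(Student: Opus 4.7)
The plan rests on two ingredients. First, since $g$ and $\overline g$ both have $w$-mean zero, we may substitute $K_{\alpha,\beta}(y-\cdot)$ in the bilinear form by its centered version $K_{\alpha,\beta}(y-\cdot)-K_{\alpha,\beta}(y-c)$ for any convenient constant $c$; the constant piece integrates to zero. Second, Proposition~\ref{p:grad} rewrites this centered kernel as
\[
C_{x,c,y}\,\frac{c-x}{(y-x)(y-c)},\qquad 0<C_{x,c,y}\le 1,
\]
with $C_{x,c,y}=1$ in the main regime. The monotonicity of $-1/y$, inherited by $K_{\alpha,\beta}$, makes this expression have a definite sign aligned with the Haar pairing $\langle x,h^{w}_{J'}\rangle_w$, which is always positive. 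This is precisely what allows $\overline g$, with its non-negative Haar coefficients, to appear on \emph{both} sides of the estimates.

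For \eqref{e:P<H} I would take $c=x_I$. For $\beta>2\lvert I\rvert$ and $\alpha$ taken small through the $\liminf$, every pair $(y,x)\in (K-I)\times I$ lies in the main regime of Proposition~\ref{p:grad}, so the centered kernel equals $\frac{x_I-x}{(y-x)(y-x_I)}$. On this range $\lvert y-x\rvert\asymp\lvert y-x_I\rvert\asymp\lvert I\rvert+\mathrm{dist}(y,I)$, whence
\[
\frac{1}{(y-x)(y-x_I)}\;\asymp\;\frac{1}{\lvert I\rvert}\cdot\frac{\lvert I\rvert}{\lvert I\rvert^2+\mathrm{dist}(y,I)^2}.
\]
Factorising the double integral, the $y$-integration produces $P(\sigma(K-I),I)$ and the $x$-integration produces $\langle x/\lvert I\rvert,\overline g\rangle_w$, up to the sign alignment above, yielding \eqref{e:P<H}.

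For \eqref{e.mono1} I would run the same scheme with $c=x_J$, now exploiting that $J\Subset I$ is good and that $\nu,\mu$ live on $K-I$. Goodness of $J$ gives $\mathrm{dist}(J,\partial I')\ge\lvert J\rvert^{\epsilon}\lvert I'\rvert^{1-\epsilon}$ for every relevant ancestor $I'$, so for $y\in K-I$ and $x\in J$ we have $\lvert y-x_J\rvert\asymp\lvert y-x\rvert\asymp\lvert J\rvert+\mathrm{dist}(y,J)$, and the hypothesis $2\lvert x-x_J\rvert<\lvert x-y\rvert$ of Proposition~\ref{p:grad} is met by a comfortable margin. Applying Proposition~\ref{p:grad}, dominating $\nu$ by $\mu$ in absolute value, and passing from $g$ to $\overline g$ via the sign alignment, both the upper bound and the asymptotic equivalence
\[
\sup_{0<\alpha<\beta}\lvert\langle H_{\alpha,\beta}\nu,g\rangle_w\rvert
\;\lesssim\;\liminf_{\alpha\downarrow 0}\langle H_{\alpha,\beta_0}\mu,\overline g\rangle_w
\;\approx\;P(\mu,J)\Bigl\langle\frac{x}{\lvert J\rvert},\overline g\Bigr\rangle_w
\]
reduce to the same gradient-plus-Poisson computation carried out for Part~1 but scaled to $J$. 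The $\liminf$ in $\alpha$ is harmless because the support of $\mu$ lies at distance at least $\lvert J\rvert^{\epsilon}\lvert I\rvert^{1-\epsilon}$ from $J$, so the $\alpha$-truncation disappears for all sufficiently small $\alpha$.

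The chief technical obstacle is keeping the sign bookkeeping consistent: one must verify that, for each good interval $J'$ in the Haar support of $g$, the pairing $\langle K_{\alpha,\beta}(y-\cdot),h^{w}_{J'}\rangle_w$ has the same sign uniformly in $y\in K-I$ as $\langle x,h^{w}_{J'}\rangle_w$. This is exactly the monotonicity point on which the lemma hinges, and it is what licenses the simultaneous replacement of $g$ by $\overline g$ on both sides, turning what looks like a signed bilinear bound into a manifestly positive Poisson comparison.
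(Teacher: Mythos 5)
Your outline captures the sign-monotonicity mechanism (Proposition~\ref{p:grad} plus the positivity of $\langle x, h^w_{J'}\rangle_w$) and the second half \eqref{e.mono1} correctly — there, goodness of $J$ supplies the separation $\operatorname{dist}(J, K-I)\ge \operatorname{dist}(J,\partial I)\ge \lvert J\rvert^{\epsilon}\lvert I\rvert^{1-\epsilon}\gg\lvert J\rvert$ automatically, so the hypotheses of Proposition~\ref{p:grad} are met uniformly and the Poisson comparison goes through exactly as you describe.

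The first half \eqref{e:P<H}, however, has a genuine gap. You assert that for $\alpha$ small, \emph{every} pair $(y,x)\in(K-I)\times I$ lies in the main regime of Proposition~\ref{p:grad}, and that $\lvert y-x\rvert\asymp\lvert y-x_I\rvert\asymp\lvert I\rvert+\operatorname{dist}(y,I)$. Neither claim holds near $\partial I$: for any fixed $\alpha>0$ there are $x\in I$ and $y\in K-I$ with $\lvert y-x\rvert<2\alpha$ (so $C_{x,x_I,y}\ne 1$), and such $y$ also violate $2\lvert x-x_I\rvert<\lvert x-y\rvert$ and the displayed comparability, since $\lvert y-x\rvert$ can be vanishingly small while $\lvert y-x_I\rvert\approx\lvert I\rvert/2$. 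Because $\sigma$ is an arbitrary locally finite measure, it can pile up arbitrarily close to $\partial I$, so this boundary region cannot be dismissed. This is precisely where the hypothesis ``$\sigma$ has no point mass at the endpoints of $I$'' — which your proposal never invokes — enters: the paper excises a thin annulus $\lambda I\setminus I$ with $\lambda$ slightly larger than $1$, so that $\sigma(\lambda I\setminus I)$ is small, works with $K-\lambda I$ on the separated region where the gradient identity with $C=1$ is valid and $\lvert y-x\rvert\asymp\lvert y-x_I\rvert$, and then controls the leftover pairing $\langle H_{\alpha,\beta}(\sigma(\lambda I\setminus I)), h^w_I\rangle_w$ via Lemma~\ref{l:intervals} and the smallness of $\sigma(\lambda I\setminus I)$. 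Without this excision-and-error-control step your argument does not close, and the hypothesis on point masses would be unused — a sign that something essential is missing.
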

%%%%%%%%%%%%%%%%%%%%%%%%%%%%%% LEMMA LEMMA LEMMA

The truncations enter into the formulation of the lemma, since they play a notable role here. 
We need this preparation.  

%%%%%%%%%%%%%%%%%%%%%%%%%%%%%% LEMMA LEMMA LEMMA
\begin{lemma}\label{l:intervals} Let $ I$ and $ J$ be two intervals which share an endpoint $ a$, at which neither $ \sigma $ nor $ w$ have 
a point mass. Then, 
\begin{equation}\label{e:intervals}
\sup _{0 < \alpha < \beta } \lvert  \langle  H _{\alpha , \beta  } \sigma I,  J\rangle _{w}\rvert 
\lesssim \mathscr A_2 ^{1/2} \sqrt {\sigma (I) w (J)}. 
\end{equation}
\end{lemma}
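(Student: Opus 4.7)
My proposed proof. The implicit assumption to run the argument cleanly is that $I$ and $J$ have disjoint interiors, meeting only at the common endpoint $a$ (the exponent $\mathscr A_2^{1/2}$ on the right-hand side forces this geometry, since a ``contained'' configuration would require the testing constant). From the explicit form of $K_{\alpha,\beta}$ in \eqref{e:Kdef} one reads off $|K_{\alpha,\beta}(y)|\lesssim 1/|y|$ uniformly in $\alpha<\beta$, so it suffices to control the positive bilinear form
\begin{equation*}
	\iint_{I\times J}\frac{d\sigma(y)\,dw(x)}{|y-x|}.
\end{equation*}
Under the opposite-side geometry, $|y-x|=|y-a|+|x-a|$ for $(y,x)\in I\times J$. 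Pushing forward $\sigma|_I$ and $w|_J$ by $t=|y-a|$ and $s=|x-a|$ produces positive measures $\tilde\sigma$ on $(0,|I|]$ and $\tilde w$ on $(0,|J|]$ (no mass at the origin, since $\sigma,w$ do not charge $a$), and the form rewrites as $\iint (s+t)^{-1}\,d\tilde\sigma(t)\,d\tilde w(s)$.

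The second move is a layer-cake identity: $\frac{1}{s+t}=\int_{s+t}^{\infty}u^{-2}\,du$, so Fubini gives
\begin{equation*}
	\iint \frac{d\tilde\sigma(t)\,d\tilde w(s)}{s+t}=\int_0^\infty \frac{R(u)}{u^2}\,du,\qquad R(u):=(\tilde\sigma\otimes\tilde w)\bigl\{(t,s):t+s<u\bigr\}.
\end{equation*}
The plan is to bound $R(u)$ by \emph{two} distinct ceilings. The trivial one is $R(u)\le \sigma(I)\,w(J)$. The Poisson-$A_2$ ceiling comes from $R(u)\le \tilde\sigma([0,u])\,\tilde w([0,u])\le \sigma(K_u)\,w(K_u)$ with $K_u:=[a-u,a+u]$; since the Poisson kernel exceeds $1/|K_u|$ on $K_u$ itself, $P(\sigma,K_u)\ge \sigma(K_u)/|K_u|$ and likewise for $w$, so \eqref{e:A2} yields $\sigma(K_u)\,w(K_u)\le 4\mathscr A_2\,u^2$.

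Splitting the $u$-integral at the crossover scale $u^\ast:=\sqrt{\sigma(I)w(J)/(4\mathscr A_2)}$ and using each ceiling on its favorable side,
\begin{equation*}
	\int_0^{u^\ast}\frac{4\mathscr A_2\,u^2}{u^2}\,du+\int_{u^\ast}^{\infty}\frac{\sigma(I)w(J)}{u^2}\,du=4\mathscr A_2\,u^\ast+\frac{\sigma(I)w(J)}{u^\ast}\lesssim \sqrt{\mathscr A_2\,\sigma(I)\,w(J)},
\end{equation*}
which is the desired estimate, uniformly in $0<\alpha<\beta$.

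The delicate point is not in any single calculation but in the \emph{choice of decomposition}. A more obvious dyadic-shell decomposition of $I$ and $J$ around $a$, combined with Cauchy--Schwarz, produces only $\mathscr A_2\max(|I|,|J|)$---off by a square root from the sharp exponent. The layer-cake formulation makes it transparent that $R(u)$ has two natural ceilings crossing at exactly the right scale, and optimizing there is what converts the Poisson-$A_2$ condition into its square root.
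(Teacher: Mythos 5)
Your proof is correct, and it takes a genuinely different route from the paper's. The paper first dispenses with the diagonal case $\lvert I\rvert \simeq \lvert J\rvert$ by citing the weak boundedness principle of \cite{10014043}, then (for $10\lvert I\rvert < \lvert J\rvert$) decomposes $J\setminus 10I$ into arithmetic shells $(n+1)I\setminus nI$ and applies Cauchy--Schwarz in $n$, recognizing one factor as $P(w,I)^{1/2}$ and the other as $w(J)^{1/2}$; the $A_2$ hypothesis then enters through $\sigma(I)/\lvert I\rvert \le P(\sigma,I)$. Your layer-cake argument replaces the shell decomposition and Cauchy--Schwarz with a distribution-function identity and a two-ceiling optimization. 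This has two advantages: it is symmetric in $I$ and $J$, so no case split on relative lengths is needed, and it uses the $A_2$ hypothesis at a continuum of scales around the crossover $u^\ast$ rather than pinning it to the single scale $\lvert I\rvert$. The paper's version, by contrast, localizes the Poisson integral at $I$, which is slightly more in the spirit of the off-diagonal estimates used elsewhere in the argument, but is no shorter. Your observation that disjointness of the interiors is an implicit hypothesis is also correct and worth recording: if, say, $I=J$, the left side of \eqref{e:intervals} is governed by the testing constant $\mathscr T$, not $\mathscr A_2^{1/2}$, so the lemma as literally stated would be false; the only invocation of the lemma in the paper (inside the proof of Lemma~\ref{mono}, with the pair $(\lambda I\setminus I, I)$) does satisfy the disjointness.
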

%%%%%%%%%%%%%%%%%%%%%%%%%%%%%% LEMMA LEMMA LEMMA

%%%%%%%%%%%%%%%%%%%%%%%%%%%%%% PROOF PROOF PROOF
\begin{proof}

If $ \lvert  I\rvert \simeq \lvert  J\rvert  $, this inequality is the weak boundedness principle of \cite{10014043}*{\S2.2}.  
So, let us assume that $ 10 \lvert  I\rvert < \lvert  J\rvert $.  Then, it remains to bound 
\begin{align*}
 \lvert  \langle  H _{\alpha , \beta  } \sigma I,  (J \setminus 10I)\rangle _{w}\rvert
& \le \sum_{n=11 } ^{\infty } \frac {\sigma (I)  w (J \cap ( (n+1 ) I \setminus n I))} {  n \lvert  I\rvert }  
\\
& \le \frac {\sigma (I) } {\lvert  I\rvert ^{1/2}  } P (w, I) ^{1/2} w (J) ^{1/2} \lesssim \mathscr A_2 ^{1/2} \sqrt {\sigma (I) w (J)}. 
\end{align*}
This depends upon obvious kernel bounds, and an application of Cauchy--Schwarz to derive the Poisson term above. 
\end{proof}
%%%%%%%%%%%%%%%%%%%%%%%%%%%%%% PROOF PROOF PROOF

%%%%%%%%%%%%%%%%%%%%%%%%%%%%%% PROOF PROOF PROOF
\begin{proof}[Proof of Lemma~\ref{mono}] 
By linearity, it suffices to prove \eqref{e:P<H} in the case of $ g = h ^{w} _{I}$.  The point is to separate the supports of the functions involved. 
Since $ I$ does not have a point mass at the end point of $ I$, we have  $ \sigma (\lambda I \setminus I) \downarrow 0$ as $ \lambda \downarrow 1$. 
It follows that we can fix a $ \lambda >1$ sufficiently small so that $ P (\sigma (K-I), I) \simeq P (\sigma (K- \lambda I), I)$, and one more condition that 
we will come back to.  Then, for $ 0 < \alpha < \tfrac 12 (\lambda-1) \lvert  I\rvert  $,  we estimate as below, where $ x_I$ is the center of $ I$,  
\begin{align*}
\langle  H _{\alpha , \beta } (\sigma (K - \lambda I)), h ^{w} _{I} \rangle _{w} 
& = 
\int _{K- \lambda I} \int _{I} 
\{  K _{\alpha , \beta}(y-x) - K _{\alpha ,\beta}(y-x_I)\} h ^{w} _{I} (x) \; w (dx) \, \sigma (dy) 
\\
&= \int _{K- \lambda I} \int _{I} 
 \frac {x-x_I} { (y- x) (y- x_J)} h ^{w} _{I} (x) \; w (dx) \, \sigma (dy) 
\\
& \gtrsim   P (\sigma (K-I), I) \bigl\langle \frac {x-x_I} {\lvert  I\rvert },h ^{w} _{I}  \bigr\rangle_{w}. 
\end{align*}
We have subtracted the term, since $ h ^{w} _{I} $ has integral zero, then applied \eqref{e:grad} with $ C _{x,x_J,y}=1$, as follows from our choices of $ \alpha $ and $ \beta $. 
Then, note that $ (x-x_J) h ^{w} _{I} \geq 0$, so that we can pull out the Poisson term.  The last line follows by  our selection of $ \lambda $ sufficiently close  to $ 1$.  
Then, the last condition needed, is to select $ \lambda $ sufficiently close to one that, in view of \eqref{e:intervals}, 
\begin{equation*}
\sup _{\alpha , \beta }\lvert  \langle  H _{\alpha , \beta } (\lambda I \setminus I), h ^{w} _{I}  \rangle _{w} \rvert
\lesssim \mathscr A_2 ^{1/2} \sqrt { \sigma (\lambda I \setminus I) } 
< c  P (\sigma (K-I), I) \bigl\langle \frac {x-x_I} {\lvert  I\rvert },h ^{w} _{I}  \bigr\rangle_{w}. 
\end{equation*}
In the last  line, $ c>0$ is an absolute constant.  This completes the proof of \eqref{e:P<H}.  

\smallskip

Turn to \eqref{e.mono1}.  The estimate \eqref{e:grad} applies.  
\begin{align*}
\lvert    \langle H _{\alpha , \beta } \nu    , g \rangle _{w }\rvert\rvert 
& = 
\Bigl\lvert 
\int_{K - I}\int _{J} \{ K _{\alpha , \beta}(y-x) - K _{\alpha , \beta}(y-x_J)\} 
h ^{w} _{J} (x) \; w (dx) \nu (dy)  
\Bigr\rvert
\\
&= \Bigl\lvert 
\int _{K - I}\int _{J}  C _{x,x_J,y} \frac { (x-x_J) } { (y-x) (y-x_J)} 
h ^{w} _{J} (x) \; w (dx) \nu (dy)  
\Bigr\rvert
\end{align*}
But recall that $ 0 \le C _{x,x_J,y} \leq 4$, and equals one for $ \alpha $ sufficiently small. 
Moreover, $ y-x$ and $ y- x_J$ have the same sign, and $(x-x_J) h ^{w} _{J} (x) \geq 0$.  
So 
an upper bound is obtained by passing from $ \nu $ to $ \mu $.  
\begin{align*}
\lvert    \langle H _{\alpha , \beta } \nu    , g \rangle _{w }\rvert 
&\le 
\int_{K - I}\int _{J}  \frac { (x-x_J) } { (y-x) (y-x_J)} 
h ^{w} _{J} (x) \; w (dx)  \mu  (dy) 
\\
& \simeq P (\mu , J) \bigl\langle \frac x {\lvert  J\rvert },  h ^{w }_{J}\bigr\rangle_{w}.
\end{align*}
\end{proof}
%%%%%%%%%%%%%%%%%%%%%%%%%%%%%% PROOF PROOF PROOF

The concept of \emph{energy} is fundamental to the subject.  For interval $ I$, define 
\begin{equation}  \label{e:Edef}
	E (w,I) ^2 := \mathbb E ^{w (dx)} _{I} \mathbb E ^{w (dx')} _{I} 
	\frac { (x-x') ^2 } {\lvert  I\rvert ^2  }  = \frac2 {w (I)} \sum_{J\subset I} \bigl\langle \frac x {\lvert  I\rvert }, h ^{w} _{J}  \bigr\rangle_{w} ^2 \,. 
\end{equation}
Now, consider the \emph{energy constant}, the smallest constant $ \mathscr E$ such that this condition holds, as presented or 
in its dual formulation.   
For all dyadic intervals   $ I_0$, all partitions $ \mathcal P$ of $ I_0$ into dyadic intervals, it holds that 
\begin{equation} \label{ENG}
	\sum_{I\in \mathcal P} P (\sigma I_0, I) ^2 	E (w,I) ^2 w (I) \le \mathscr E ^2 \sigma (I_0)\,. 
\end{equation}
This was shown in \cite{10014043}*{Proposition 2.11} 

%%%%%%%%%%%%%%%%%%%%%%%%%%%%%% PROPOSITION PROPOSITION PROPOSITION
\begin{proposition}\label{p:energy} For a finite constant $ C_0$, 
 $ \mathscr E ^2 \le C_0\{\mathscr A_2 ^{1/2}   + \mathscr T\} ^2  = C_0\mathscr H ^2 $.  
\end{proposition}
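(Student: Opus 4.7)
The goal is to show
$$\sum_{I \in \mathcal P} P(\sigma I_0, I)^2\, E(w, I)^2\, w(I) \lesssim \mathscr H^2 \sigma(I_0),$$
uniformly over dyadic intervals $I_0$ and partitions $\mathcal P$ of $I_0$ into dyadic subintervals. My plan is to combine the Haar--Plancherel identity
$$E(w, I)^2\, w(I) \simeq \sum_{J \subset I} \langle x/\lvert I\rvert, h^w_J\rangle_w^2$$
with the Monotonicity Lemma~\ref{mono}, so as to convert each Poisson--energy factor into a Hilbert-transform Haar coefficient; the testing hypothesis and Bessel's inequality then close the estimate. A small ``diagonal'' remainder is controlled by a pointwise consequence of $A_2$.

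First I split $P(\sigma I_0, I)^2 \lesssim P(\sigma(I_0 - I), I)^2 + P(\sigma I, I)^2$. For the off-diagonal piece I apply Lemma~\ref{mono} with $K = I_0$ and $g = h^w_J$ for each $J \subset I$: since $\langle x/\lvert I \rvert, h^w_J\rangle_w \ge 0$, the ``barred'' function $\bar g$ equals $h^w_J$ itself, so
$$P(\sigma(I_0 - I), I)\, \langle x/\lvert I \rvert, h^w_J\rangle_w \lesssim \langle H(\sigma(I_0 - I)), h^w_J\rangle_w.$$
Squaring, summing over $J \subset I$ via Plancherel, and then over $I \in \mathcal P$,
$$\sum_{I} P(\sigma(I_0 - I), I)^2\, E(w, I)^2\, w(I) \lesssim \sum_{I} \sum_{J \subset I} \lvert \langle H(\sigma(I_0 - I)), h^w_J\rangle_w \rvert^2.$$
Writing $H(\sigma(I_0 - I)) = H(\sigma I_0) - H(\sigma I)$ and using disjointness of $\{I : I \in \mathcal P\}$, Bessel's inequality followed by testing gives
$$\sum_{I} \sum_{J \subset I} \lvert\langle H(\sigma I_0), h^w_J\rangle_w\rvert^2 \le \lVert H(\sigma I_0)\rVert_{L^2(I_0, w)}^2 \le \mathscr T^2 \sigma(I_0),$$
and $\sum_{I \in \mathcal P} \lVert H(\sigma I) \rVert_{L^2(I, w)}^2 \le \mathscr T^2 \sum_I \sigma(I) = \mathscr T^2 \sigma(I_0)$.

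For the diagonal piece I use $P(\sigma I, I) \simeq \sigma(I)/\lvert I \rvert$, the trivial bound $E(w, I) \le 1$, and the pointwise consequence of $A_2$, namely $\sigma(I)\, w(I)/\lvert I \rvert^2 \le P(\sigma, I)\, P(w, I) \le \mathscr A_2$. This gives
$$\sum_I P(\sigma I, I)^2\, E(w, I)^2\, w(I) \le \sum_I \sigma(I)^2 w(I)/\lvert I \rvert^2 \le \mathscr A_2 \sigma(I_0).$$
Combining the two pieces yields $\mathscr E^2 \lesssim \mathscr A_2 + \mathscr T^2$, i.e.\ $\mathscr E \lesssim \mathscr A_2^{1/2} + \mathscr T = \mathscr H$.

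The main subtlety is the monotonicity step. Lemma~\ref{mono} is written with the proof carried out for $g = h^w_I$, so one must verify that its conclusion extends to every sub-Haar $g = h^w_J$ with $J \subsetneq I$: this amounts to re-running the kernel expansion $K_{\alpha, \beta}(y - x) - K_{\alpha, \beta}(y - x_J)$ and observing that for $y$ outside a small dilate of $I$ one has $\lvert y - x_J \rvert \simeq \lvert y - x_I \rvert$, so that the Poisson factor naturally lives at scale $\lvert I \rvert$ rather than $\lvert J \rvert$, together with absorbing the annulus $\lambda I \setminus I$ via the weak-boundedness-type bound of Lemma~\ref{l:intervals} exactly as in the base case. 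One also needs the non-negativity of $\langle x/\lvert I \rvert, h^w_J\rangle_w$ in order to legitimately square the monotonicity inequality. Once these points are established, Bessel, testing, and the elementary pointwise $A_2$ estimate for the diagonal close the argument.
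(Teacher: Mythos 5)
Your proof is correct and follows essentially the same route as the paper: both derive the ``with-holes'' estimate $P(\sigma(I_0-I),I)^2\,E(w,I)^2\,w(I)\lesssim \lVert H_{\alpha,\beta}(\sigma(I_0-I))\rVert^2_{L^2(I,w)}$ from the Monotonicity Lemma, then split by linearity of $H_\sigma$ and invoke interval testing over the disjoint $I\in\mathcal P$. Your squaring-then-Bessel step is just an equivalent rephrasing of the paper's tactic of pairing against a single well-chosen test function and using Cauchy--Schwarz. You also explicitly supply the diagonal term $P(\sigma I,I)^2 E(w,I)^2 w(I)\lesssim \mathscr A_2\,\sigma(I)$ via the simple $A_2$ bound $\sigma(I)w(I)/\lvert I\rvert^2\lesssim\mathscr A_2$, a step the paper's written proof leaves implicit even though \eqref{ENG} is stated with the full Poisson $P(\sigma I_0,I)$.
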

%%%%%%%%%%%%%%%%%%%%%%%%%%%%%% PROPOSITION PROPOSITION PROPOSITION

We will always estimate $ \mathscr E$ by $ \mathscr H$.   The proof is recalled here. 

%%%%%%%%%%%%%%%%%%%%%%%%%%%%%% PROOF PROOF PROOF
\begin{proof}
It suffices to consider the case of finite partitions $ \mathcal P$ of $ I$. We first prove a version of the energy inequality with `holes' in the argument of the Poisson.  It follows from \eqref{e:P<H} that we can fix $ 0 < \alpha < \beta $ such that 
\begin{equation*}
P (\sigma (I_0-I), I) ^2 	E (w,I) ^2 w (I)  
\lesssim  \lVert  H _{\alpha , \beta } ( \sigma (I_0-I))\rVert_{ L ^{2} (I, \sigma )} ^2 , \qquad  I\in \mathcal P. 
\end{equation*}
Then, using linearity  and interval testing, we have 
\begin{gather*}
\sum_{I\in \mathcal P} 
\lVert  H _{\alpha , \beta } (\sigma \cdot  I_0)\rVert_{ L ^{2} (I, \sigma )} ^2
\lesssim \lVert  H _{\alpha , \beta } (\sigma \cdot  I_0)\rVert_{ L ^{2} (I, \sigma )} ^2 \lesssim \mathscr H ^2 \sigma (I_0), 
\\ \textup{and} \qquad 
\sum_{I\in \mathcal P} 
 \lVert  H _{\alpha , \beta } ( \sigma \cdot I)\rVert_{ L ^{2} (I, \sigma )} ^2 
 \lesssim \mathscr H ^2 \sum_{I\in \mathcal P}  \sigma (I) \lesssim \mathscr H ^2 \sigma (I_0). 
\end{gather*}
Then, by the $ A_2$ bound, we have $ P (\sigma \cdot I, I) ^2 	E (w,I) ^2 w (I)   \lesssim \sigma (I)$, which we can sum over the partition. 
This completes the proof. 
\end{proof}
%%%%%%%%%%%%%%%%%%%%%%%%%%%%%% PROOF PROOF PROOF

One should keep in mind that the concept of energy is related to the tails of the Hilbert transform. The energy inequality, 
and its multi-scale extension to the functional energy inequality, show that the control of the tails is very subtle in this problem.

We also need the following elementary Poisson estimate from \cite{V}; used  occasionally in this argument, it is crucial to the proof of Lemma~\ref{l:above}.  

\begin{lemma}
	\label{Poisson inequality}Suppose that $J\Subset I\subset I_0$, and that $ J$ is good.  Then 
\begin{equation}
\lvert J\rvert ^{2\epsilon -1}P(\sigma  ({I_0 - I})  , J)\lesssim \lvert I\rvert ^{2\epsilon -1}P
(\sigma ({I_0- I}) , I).  \label{e.Jsimeq}
\end{equation}
\end{lemma}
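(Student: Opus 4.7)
The plan is to prove the inequality pointwise on the integrand. Writing out the Poisson terms, the inequality reduces to showing that for every $x\in I_0\setminus I$,
\[
\frac{\lvert J\rvert^{2\epsilon}}{\lvert J\rvert^2+\textup{dist}(x,J)^2}\ \lesssim\ \frac{\lvert I\rvert^{2\epsilon}}{\lvert I\rvert^2+\textup{dist}(x,I)^2},
\]
after which one integrates against $\sigma$ restricted to $I_0\setminus I$. So the entire argument is a deterministic comparison of two kernels, driven by two inputs: the trivial fact that $\textup{dist}(x,J)\ge\textup{dist}(x,I)$ whenever $x\notin I$ and $J\subset I$ (in $\mathbb R$, the segment from $x$ to any $y\in J$ crosses $\partial I$), and the goodness of $J$.

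The key use of goodness is the following. Since $J\Subset I$ means $2^r\lvert J\rvert\le\lvert I\rvert$, the goodness of $J$ applied to the interval $I$ gives $\textup{dist}(J,\partial I_\pm)\ge\lvert J\rvert^\epsilon\lvert I\rvert^{1-\epsilon}$; in particular $\textup{dist}(J,\partial I)\ge\lvert J\rvert^\epsilon\lvert I\rvert^{1-\epsilon}$. Combining this with the observation above, for every $x\in I_0\setminus I$,
\[
\textup{dist}(x,J)\ \ge\ \max\bigl\{\textup{dist}(x,I),\ \lvert J\rvert^\epsilon\lvert I\rvert^{1-\epsilon}\bigr\}.
\]

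From here I would split into two cases. When $\textup{dist}(x,I)\le\lvert I\rvert$, the RHS denominator is comparable to $\lvert I\rvert^2$, so the RHS is $\gtrsim\lvert I\rvert^{2\epsilon-2}$; meanwhile the goodness bound yields $\lvert J\rvert^2+\textup{dist}(x,J)^2\ge\lvert J\rvert^{2\epsilon}\lvert I\rvert^{2-2\epsilon}$, making the LHS at most $\lvert I\rvert^{2\epsilon-2}$. When $\textup{dist}(x,I)>\lvert I\rvert$, both denominators are comparable to $\textup{dist}(x,I)^2$ (since $\lvert J\rvert\le\lvert I\rvert\le\textup{dist}(x,I)$ and $\textup{dist}(x,J)\le\textup{dist}(x,I)+\lvert I\rvert\le 2\textup{dist}(x,I)$), and the ratio of numerators $(\lvert J\rvert/\lvert I\rvert)^{2\epsilon}$ is at most one.

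There is no real obstacle — the lemma is essentially a bookkeeping exercise once the correct consequence of goodness is extracted. The only mild care needed is to verify that goodness is indeed applicable to $I$ (the condition $\lvert I\rvert\ge 2^{r-1}\lvert J\rvert$ follows from $J\Subset I$) and to notice that the far-field and near-field regimes must be handled separately because the Poisson kernel changes its asymptotic behavior at scale $\lvert I\rvert$.
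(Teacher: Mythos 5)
Your proof is correct and follows essentially the same route as the paper: the paper also reduces to the pointwise comparison of the two Poisson kernels on $I_0\setminus I$, quoting the goodness estimate $\textup{dist}(J,I_0\setminus I)\ge\lvert J\rvert^\epsilon\lvert I\rvert^{1-\epsilon}$, and then integrates against $\sigma$. The only difference is that you supply the near-field/far-field case analysis that the paper leaves implicit.
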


\begin{proof}
	We have $ \textup{dist}(J, I_0 - I ) \ge \lvert  J\rvert ^{\epsilon } \lvert  I\rvert ^{1- \epsilon }  $, 
	so that for any $ x\in I_0  - I $, we have 
\begin{equation*}
	 \frac {\lvert  J\rvert ^{2 \epsilon }} {(\lvert  J\rvert + \textup{dist}(x, J) ) ^2 } \lesssim   
		\frac {\lvert  I\rvert   ^{2 \epsilon }} {(\lvert  I\rvert + \textup{dist}(x, I) ) ^2 } \,. 
\end{equation*}	
 Integrating this last expression, it follows that 
\begin{align*}
\lvert J\rvert ^{2\epsilon -1}P(\sigma \cdot ({I_0 - I}) , J) 
& = 
\lvert J\rvert ^{2\epsilon -1} \int _{I_0 - I} 
\frac {\lvert  J\rvert }{(\lvert  J\rvert + \textup{dist}(x, J) ) ^2 } \; d \sigma 
\\
& \lesssim \lvert  I\rvert ^{2 \epsilon } 
 \int _{I_0 - I} 
\frac { 1 }{(\lvert  J\rvert + \textup{dist}(x, J) ) ^2 } \; d \sigma \,. 
\end{align*}
And this proves the inequality. 

\end{proof}

%%%%%%%%%%%%%%%%%%%%%%%%%%%%%%%%%%%%%%%%%   Section Section 
\section{The Functional Energy Inequality} \label{s.core} 
We state an important  multi-scale  extension of the energy inequality \eqref{ENG}.

\begin{definition}
\label{F adapted}Let $\mathcal{F}$ be a collection of dyadic intervals.   
A collection of (good) functions 
$\{g_{F}\}_{F\in \mathcal{F}}$ in $L^{2}\left( w \right) $ is said to
be \emph{$\mathcal F$-adapted}     if 
for all $ F\in \mathcal F$,  the  Haar support of the function $ g_F$ is contained in 
$ \{J \::\: \dot \pi _{\mathcal F}J=F\}$.
\end{definition}

\begin{definition}
\label{functional energy}Let $\mathscr{F}$ be the smallest constant in the
inequality below, or its dual form.  The inequality holds for all non-negative $h\in L^{2}(\sigma )$, all $%
\sigma $-Carleson collections $\mathcal{F}$, and all $\mathcal F$-adapted
collections $\{g_{F}\}_{F\in \mathcal{F}}$: 
\begin{equation}
\sum_{F\in \mathcal{F}}\sum_{J^{\ast }\in \mathcal{J}^{\ast }(F)
}P(h\sigma, J^{\ast } )
\bigl\vert \bigl\langle \frac{x}{\lvert
J^{\ast }\rvert },g_{F}{{J^{\ast }}}\bigr\rangle _{w }\bigr\vert \leq \mathscr{F}\lVert h\rVert _{\sigma  }%
\biggl[ \sum_{F\in \mathcal{F}}\lVert g_{F}\rVert _{w}^{2}\biggr] ^{1/2}\,.  \label{e.funcEnergy}
\end{equation}%
Here $\mathcal{J}^{\ast }(F) $ consists of the \emph{maximal}
good intervals $J\Subset F$.  Note that the estimate is universal in $ h$ and $ \mathcal F$, separately. 
\end{definition}

This constant was  identified in \cite{11082319}, and is herein shown to be necessary from the $ A_2$ and interval 
testing inequalities.   Recall the definition of $ \mathscr H$ in \eqref{e:Hdef}. 

%%%%%%%%%%%%%%%%%%%%%%%%%%%%%% THEOREM THEOREM THEOREM
\begin{theorem}\label{t.functionalEnergy} Assume that $ \mathscr F$ satisfies \eqref{e:sCarleson},   
then,  $ \mathscr F \lesssim \mathscr H$.   
\end{theorem}
%%%%%%%%%%%%%%%%%%%%%%%%%%%%%% THEOREM THEOREM THEOREM

The first step in the proof is the domination of the constant $ \mathscr F$ by the best constant in a certain two weight inequality for 
the Poisson operator, with the weights being determined by $ w$ and $ \sigma $ in a particular way.  This is the decisive step, 
since there is a two weight inequality for the Poisson operator proved by one of us.  It reduces the full norm inequality 
to simpler testing conditions, which are in turn 
controlled by the $ A_2$ and Hilbert transform testing conditions.

\subsection{The Two Weight Poisson Inequality}  \label{s.necessary}
Consider the weight 
\begin{equation*}
	\mu \equiv \sum_{F\in \mathcal{F}}\sum_{J \in \mathcal{J} ^{\ast} (F) }\Bigl\Vert P_{F,J}^{w }\frac{x}{\lvert J\rvert }\Bigr\Vert _{w}^{2}\cdot
\delta _{(x_J,\lvert J\rvert )}\,. 
\end{equation*}
Here, $ P ^{w } _{F,J} := \sum_{J'   \;:\; J'\subset J,\ \dot \pi _{\mathcal F} J=F}  \Delta ^{w} _{J'} $. 
We can replace $x$ by $x-c$ for any choice of $c$ we wish; the projection is
unchanged. And $\delta _{q}$ denotes a Dirac unit mass at a point $q$ in
the upper half plane $\mathbb{R}_{+}^{2}$.
We prove the two-weight inequality for the Poisson integral: 
\begin{equation}
	\lVert \mathbb{P}( h \sigma )\rVert _{L^{2}(\mathbb{R}_{+}^{2},\mu )}\lesssim  \mathscr H
	\lVert   h\rVert _{ \sigma  }\,,  \label{two weight Poisson}
\end{equation}for all nonnegative $  h$. 
Above, $\mathbb{P}(\cdot )$
denotes the Poisson extension operator to the upper half-plane, so that in particular 
\begin{equation*}
	\Vert \mathbb{P}( h \sigma )\Vert _{L^{2}(\mathbb{R}_{+}^{2},\mu
	)}^{2}=\sum_{F\in \mathcal{F}}\sum_{J\in \mathcal{J} ^{\ast} (F)}\mathbb{P}\left( h \sigma \right) (x_J,\left\vert J\right\vert )^{2}\Bigl\Vert P_{F,J}^{w }\frac{x}{\left\vert J\right\vert }\Bigr\Vert _{w }^{2}\,,
\end{equation*}
where $ x_J$ is the center of the interval $ J$. 
The proof of Theorem~\ref{t.functionalEnergy} follows by duality.  

Phrasing things in this way brings a significant advantage: The characterization of the 
two-weight inequality for the Poisson operator, \cite{MR930072},  reduces the full norm 
inequality above to these testing inequalities. 
For any dyadic interval $ I \in \mathcal D$  
\begin{align}
	\int_{ \mathbb R ^2 _+}\mathbb{P}\left( \sigma\cdot I \right) ^{2}d\mu (x,t)  & \lesssim
	\mathscr H ^2  \sigma (I)\,,
\label{e.t1}
\\
\int _{\mathbb R }\mathbb{P} ^{ \ast}   (t{\widehat{I}}\mu )^{2}\sigma
(dx)& \lesssim \mathscr A_2\int_{\widehat{I}}t^{2} \; d\mu (x,t),  \label{e.t2}
\end{align}
where $\widehat{I}=I\times \lbrack 0,\lvert I\rvert ]$ is the box over $I$
in the upper half-plane, and $ \mathbb P  ^{ \ast }   $ is the dual Poisson operator 
\begin{equation*}
	\mathbb{P} ^{\ast } (t{\widehat{I}}\mu )=\int_{\widehat{I}}\frac{t^{2}}{t^{2}+\lvert x-y\rvert ^{2}}\mu (dy,dt)\,.
\end{equation*}
One should keep in mind that the intervals $ I$ are restricted to be  in our fixed dyadic grid, a reduction allowed as the 
integrations on the left in \eqref{e.t1} and \eqref{e.t2} 
are done over the entire space, either $ \mathbb R ^2 _+ $ or $ \mathbb R $. 
(Goodness of the intervals $ I$ above is not needed.) 
This reduction is critical to the analysis below. 

\begin{remark}
	A gap in the  proof of the Poisson inequality at \cite{MR930072}*{Page 542}
	can be fixed as in \cite{MR1175693} or  \cite{0807.0246}.
\end{remark}

\subsection{The Poisson Testing Inequality: The Core}

This subsection is concerned with a part of inequality  \eqref{e.t1}: 
Restrict the integral on the left to the set  $ \widehat I\subset \mathbb R ^2 _+ $. 
\begin{equation}\label{e.core}
	\int_{\widehat I }\mathbb{P}\left( \sigma\cdot I \right) ^{2}d\mu (x,t)   \lesssim
	\mathscr H ^2  \sigma (I)\,. 
\end{equation}  
Since $\left( x_J ,\left\vert J\right\vert
\right) \in \widehat{I}$ if and only if $J\subset I$,  we have
\begin{eqnarray*}
\int_{\widehat{I}}\mathbb{P}\left( \sigma\cdot I \right) (
x,t) ^{2}d\mu (x,t) 
&=&\sum_{F\in \mathcal{F}}\sum_{J\in \mathcal{J} ^{\ast} (F):\
J\subset I}\mathbb{P}\left( \sigma\cdot I \right) \left(
x_J ,\left\vert J\right\vert \right)
^{2}\Bigl\lVert P_{F,J}^{w }\frac{x}{\left\vert J\right\vert }\Bigr\rVert _{w}^{2} \\
\end{eqnarray*}

For each $ J$, 
\begin{align} \label{e.P<} 
	\begin{split}
\Bigl\lVert P_{F,J}^{w }\frac{x}{\left\vert J\right\vert }\Bigr\rVert _{w}^{2} 
&\leq \int_{J}\left\vert \frac{x- \mathbb E  _{J}^{w }x}{\left\vert J\right\vert }\right\vert ^{2}dw \left( x\right) = 2  
E\left( w,J \right) ^{2} w (J  ) \le 2 w (J)\,. 
	\end{split}
\end{align}

Let $ \mathcal F_0$ be the maximal $ F\in \mathcal F$ which are strictly contained in $ I$, 
and let $ \mathcal J ^{\sharp}$ be those dyadic $ J$ such that $ (x_J, \lvert  J\rvert )$ is in the 
support of $ \mu $, but has no parent in $ \mathcal F_0$.  
These intervals are necessarily disjoint.  
Observe that by \eqref{e.P<} and the energy inequality, 
\begin{align} \label{e:sharp}
\sum_{J\in \mathcal J ^{\sharp}} 
\mathbb P (\sigma  F) (x_J, \lvert  J\rvert ) ^2 \mu (x_J , \lvert  J\rvert )
& \lesssim 
\sum_{J\in \mathcal J ^{\sharp}}  
P (\sigma\cdot F, J ) ^2 E (w,J) ^2 w (J) \lesssim \mathscr H ^2 \sigma (F).  
\end{align}

We claim that 
\begin{equation}\label{e:F0} 
\sum_{F\in \mathcal F_0}  \int _{\hat F }
\mathbb P (\sigma (I \setminus F)) (x,t) ^2 \; d \mu (x,t)
\lesssim \mathscr H \sigma (I). 
\end{equation}
This is sufficient, since 
\begin{align*}
\int_{\widehat{I}}\mathbb{P}\left( \sigma\cdot I \right) (
x,t) ^{2}d\mu (x,t) 
& \lesssim 
\textup{LHS} \eqref{e:sharp} + \textup{LHS} \eqref{e:F0} 
+ 
\sum_{F\in \mathcal F_0}  
 \int _{\hat F }
\mathbb P (\sigma\cdot  F) (x,t) ^2 \; d \mu (x,t)
\\
& \lesssim \mathscr H ^2 \sigma (I) + 
+ 
\sum_{F\in \mathcal F_0}  
 \int _{\hat F }
\mathbb P (\sigma\cdot  F) (x,t) ^2 \; d \mu (x,t).  
\end{align*}
The individual terms in the last sum are set up for a 
recursive application of this inequality.  
Due to the Carleson condition \eqref{e:sCarleson}, this recursion will finish the proof. 

It remains to prove \eqref{e:F0}, which is another instance of the energy inequality. 
For an interval $ F_0\in \mathcal F_0$, and $ F\in \mathcal F$ strictly contained 
in $ F_0$, each interval $ J\in \mathcal J ^{\ast} (F)$ is contained in some $ J_0\in \mathcal J ^{\ast} (F_0)$. 
Then, the intervals $ F\in \mathcal F$ are not good, but $ J$ and $ J_0$ are good, hence  
\begin{align*}
\mathbb P (\sigma (I \setminus F_0)) (x_J, \lvert  J\rvert ) ^2 \mu (x_J, \lvert  J\rvert) 
&= 
\Bigl[\int _{I \setminus F_0} \frac { \lvert  J\rvert } { \lvert  J\rvert ^2 + \lvert  x-x_J\rvert  ^2 } \Bigr] ^2 
\Bigl\lVert  P ^{w} _{F,J} \frac x {\lvert  J\rvert } \Bigr\rVert_w ^2 
\\
& = 
\Bigl[\int _{I \setminus F_0} \frac { 1 } { \lvert  J\rvert ^2 + \lvert  x-x_J\rvert ^2 } \Bigr] ^2 
\lVert  P ^{w} _{F,J}  x  \rVert_w ^2 
\\
& \lesssim 
\Bigl[\int _{I \setminus F_0} \frac { \lvert  J_0\rvert } { \lvert  J_0\rvert ^2 + \lvert  x-x_{J_0}\rvert ^2 } \Bigr] ^2 
\Bigl\lVert  P ^{w} _{F,J} \frac x {\lvert  J_0\rvert } \Bigr\rVert_w ^2 .  
\end{align*}
This follows from goodness:  For $ x \in I \setminus F_0$,  
\begin{equation*}
\lvert  J\rvert ^2 + \lvert  x-x_J\rvert^2
\ge  \lvert  x-x_J\rvert ^2 \ge 
\lvert  x-x_{J_0}\rvert ^2 \ge \lvert  J_0\rvert ^{\epsilon } \lvert  F_0\rvert ^{1- \epsilon }.   
\end{equation*}
But then, we can add the projections $ P ^{w} _{F,J}  $, due to orthogonality,  and use \eqref{e.P<} again to see that  
\begin{align*}
\sum_{\substack{F\in \mathcal F \\ F\subset F_0}} 
\sum_{\substack{J \in \mathcal J ^{\ast} (F) \\ J\subset J_0} }
\mathbb P (\sigma (I \setminus F_0)) &(x_J, \lvert  J\rvert ) ^2 \mu (x_J, \lvert  J\rvert)  
\\& \lesssim 
\mathbb P (\sigma \cdot I ) (x_ {J_0}, \lvert  J_0\rvert ) ^2 
\sum_{\substack{F\in \mathcal F \\ F\subset F_0}} 
\sum_{\substack{J \in \mathcal J ^{\ast} (F) \\ J\subset J_0} }
\Bigl\lVert  P ^{w} _{F,J} \frac x {\lvert  J_0\rvert } \Bigr\rVert_w ^2 
\\
& \lesssim 
\mathbb P (\sigma \cdot I ) (x_ {J_0}, \lvert  J_0\rvert ) ^2  E (w, J_0) ^2 w (J_0).  
\end{align*}
The sum over $ F_0\in \mathcal F_0$, and $ J_0 \in \mathcal J ^{\ast} (F_0)$ is 
controlled by the energy inequality.  This completes the proof of \eqref{e:F0}.

%%%%%%%%%%%%%%%%%%%%%%%%%%%%%% SUBSECTION SUBSECTION SUBSECTION SUBSECTION
 %%%%%%%%%%%%%%%%%%%%%%%%%%%%%% SUBSECTION SUBSECTION SUBSECTION SUBSECTION 
 \subsection{The Poisson Testing Inequality: The Remainder}%\label{ss.}
Now we turn to proving the following estimate for the global part of the
first testing condition \eqref{e.t1}:\begin{equation*}
\int_{\mathbb{R}_{+}^{2}- \widehat{I}}\mathbb{P}\left( \sigma\cdot I \right) ^{2}d\mu \lesssim \mathscr{A}_{2}
\sigma (I)\,.
\end{equation*}
Decompose the integral on the left into four terms:   With $ F_J$ the unique $ F\in \mathcal F$ with $ J\in \mathcal J ^{\ast} (F)$, 
and using \eqref{e.P<}, 
\begin{eqnarray*}
	\int_{\mathbb{R}_{+}^{2}- \widehat{I}} 
	\mathbb{P}\left( \sigma\cdot I \right) ^{2}d\mu &=&\sum_{J:\ \left( x_J ,\left\vert J\right\vert \right) \in \mathbb{R}_{+}^{2}- \widehat{I}}\mathbb{P}\left( \sigma\cdot I \right)
\left( x_J ,\left\vert J\right\vert \right)
^{2} \Bigl\Vert P_{F _{ J},J}^{w }\frac{x}{\lvert J\rvert }\Bigr\Vert _{w }^{2} \\
&\le &\Biggl\{ \sum_{\substack{ J \;:\;  J\cap 3I=\emptyset  \\ \left\vert
J\right\vert \leq \left\vert I\right\vert }}+\sum_{ J \;:\; J\subset
3I- I}+\sum_{\substack{J \;:\;  J\cap I=\emptyset  \\ \left\vert
J\right\vert >\left\vert I\right\vert }}+\sum_{ J \;:\; J\supsetneqq
I}\Biggr\}
\mathbb{P}\left( \sigma\cdot I \right) \left(  x_J ,\left\vert J\right\vert \right) ^{2} w (J)\\
&=&A+B+C+D.
\end{eqnarray*}

Decompose term $A$ according to the length of $J$ and its
distance from $I$,   to obtain:
\begin{align*}
A &\lesssim\sum_{n=0}^{\infty }\sum_{k=1}^{\infty }
\sum_{\substack{ J \;:\;  J\subset 3^{k+1}I- 3^{k}I  \\ \left\vert J\right\vert =2^{-n}\left\vert I\right\vert }}
\left( \frac{2^{-n}\left\vert
I\right\vert }{\textup{dist}\left( J,I\right) ^{2}}\sigma ( I)\right) ^{2}w ( J) \\
&\lesssim \sum_{n=0}^{\infty }2^{-2n}\sum_{k=1}^{\infty }
\frac{\left\vert
I\right\vert ^{2}\sigma ( I)w( 3^{k+1}I- 3^{k}I)}{\left\vert 3^{k}I\right\vert
^{4}}\sigma ( I) \\
&\lesssim \sum_{n=0}^{\infty }2^{-2n}\sum_{k=1}^{\infty }3^{-2k}\left\{ 
\frac{\sigma ( 3^{k+1}I)w(
3^{k+1}I)}{\left\vert 3^{k}I\right\vert ^{2}}\right\}
\sigma ( I)\lesssim \mathscr A_2\sigma ( I).
\end{align*}

Decompose term $B$ according to the length of $J$ and
then use the Poisson inequality \eqref{e.Jsimeq},  available to use because of goodness of intervals $ J$.  
We then obtain
\begin{align*}
	B &\lesssim \sum_{n=0}^{\infty }\sum_{\substack{ J \;:\;  J\subset
3I- I  \\ \left\vert J\right\vert =2^{-n}\vert \vert }} 2^{-n(2-4\epsilon) }\frac{\sigma ( I) ^2 }{\left\vert I\right\vert ^2 }  w ( J) 
\\
& \lesssim  \sum_{n=0}^{\infty } 2^{-n(2-4\epsilon) }\frac{\sigma ( 3I)w( 3I)}
{\vert 3I\vert^ 2}\sigma ( I)
\lesssim  
\mathscr A_{2}\sigma ( I).
\end{align*}

For term $C$,   for $ n = 1, 2 ,\dotsc, $, set $ \mathcal J_n$ to be those  good dyadic intervals $ J$ with $ \lvert  J\rvert > \lvert  I\rvert  $, 
$ J\cap I = \emptyset $,  and 
\begin{equation*}
(n-1) \lvert  J\rvert \le \textup{dist} (I,J) < n \lvert  J\rvert.   
\end{equation*}
These intervals have bounded overlaps.
Indeed, suppose that $ J_1 \subsetneq \cdots \subsetneq J _{r} $ are all members for $ \mathcal J_1$.  Then, by goodness, 
\begin{align*} 
\textup{dist} (J_1, I) & \geq \textup{dist}(J_r, I) \geq (n-1) 2 ^{r} \lvert  J_1\rvert + 
\textup{dist} (J_1 , \partial J _{r}) 
\\&
\geq \{(n-1) 2 ^{r} + 2 ^{r (1- \epsilon )} \} \lvert  J_1\rvert . 
\end{align*}
which is a contradiction to membership in $ \mathcal J_n$. 
Restricting the sum to intervals in $ \mathcal J_n$, there holds 
\begin{align*}
\sum_{J\in \mathcal J_n} 
\mathbb{P}( \sigma\cdot I ) (  x_J ,\vert J\vert ) ^{2} w (J) 
& \lesssim 
\sigma (I) ^2 \sum_{J\in \mathcal J_n}  
\frac { w (J)} { n ^{4} \lvert  J\rvert ^2  } 
\\
& \lesssim 
\frac  {\sigma (I) ^2 } {\lvert  I\rvert } \sum_{J\in \mathcal J_n}  
\frac { w (J) \cdot \lvert  I\rvert } { n ^{4} \lvert  J\rvert ^2  } 
\\
& \lesssim \frac {\sigma (I)} {n ^2 } \cdot \frac  {\sigma (I)} {\lvert  I\rvert } P (w, I) \lesssim \mathscr A_2  \frac {\sigma (I)} {n ^2 } . 
 \end{align*}
And this is summable in $ n \in \mathbb N $.

In the last  term $D$, all the intervals $ J$ contain $ I$. Note that 
\begin{align*}
\sum_{J \::\: J\supsetneq I} 
\mathbb{P}\left( \sigma\cdot I \right) \left(  x_J ,\left\vert J\right\vert \right) ^{2} w (J) 
& \lesssim 
\sigma (I) ^2  \sum_{J \::\: J\supsetneq I}  
\frac { w (J)} { \lvert  J\rvert ^2  } 
\\
& \lesssim \sigma (I) \cdot  \frac {\sigma (I)} { \lvert  I\rvert  }  \sum_{J \::\: J\supsetneq I}  \frac { w (J) \cdot \lvert  I\rvert } { \lvert  J\rvert ^2  } 
\\
& \lesssim \sigma (I) \cdot  \frac  {\sigma (I)} {\lvert  I\rvert } P (w, I) \lesssim \mathscr A_2  \sigma (I).  
\end{align*}

%%%%%%%%%%%%%%%%%%%%%%%%%%%%%%%%%%%%%%%%%%%%%%%%%%%%  
\subsection{The Dual Poisson Testing Inequality}

We are considering \eqref{e.t2}. Note that there is a power of $ t$ on both sides, and that the expressions on the two sides of this inequality are  
\begin{gather*}
\int_{\widehat{I}}t^{2}\mu (dx,dt)=\sum_{F\in \mathcal{F}}
\sum_ {\substack{{J\in  \mathcal{J}^{\ast}(F) }\\   J \subset I }}\lVert P_{F,J}^{w
}x\rVert _{w}^{2}\,,
\\
\mathbb{P} ^{\ast} ( t{\widehat{I}}\mu ) \left(
x\right) =\sum_{F\in \mathcal{F}}
\sum_ {\substack{{J\in  \mathcal{J}^{\ast}(F) }\\   J \subset I }}
\frac{\lVert P_{F,J}^{w }x\rVert
_{w}^{2}}{\lvert J\rvert ^{2}+\lvert x-x_J\rvert
^{2}} \,. 
\end{gather*}
We are to dominate $ \lVert \mathbb{P} ^{\ast } ( t{\widehat{I}}\mu ) \rVert_{\sigma} ^2 $ by the   first expression above.  
The squared norm will be the sum over integers $s $ of $ T_s$ below, in which 
the relative lengths of $ J $ and $ J'$ are fixed by $ s$. Suppressing the requirement that $ J, J'\subset I$, 
\begin{align*}
	T_{s}& :=\sum_{F\in \mathcal{F}}\sum_{\substack{ J\in \mathcal{J}^{\ast}(F) }}
	\sum_ {\substack{{F'\in \mathcal{F}} }} 
	\sum_{\substack{ J'\in \mathcal{J}^{\ast}(F)  \\ \lvert J'\rvert =2^{-s}\lvert J\rvert }}\int \frac{\lVert P^w_{F,J}x\rVert _{ w }^{2}}{\lvert J\rvert ^{2}+\lvert x-x_J\rvert ^{2}} \cdot \frac{\lVert 
	 P^w_{F',J'}  x\rVert _{ w}^{2}}{\lvert J'\rvert ^{2}+\lvert x-x_{J'}\rvert
^{2}}d\sigma \\
& \leq M_{s}\sum_{F\in \mathcal{F}}\sum_{\substack{ J\in \mathcal{J}^{\ast}(F)}}\lVert P^w_{F,J}x\rVert _{w }^{2} \\
\textup{where}\quad M_{s}& \equiv \sup_{F\in \mathcal{F}}\sup_{\substack{ J\in \mathcal{J}^{\ast}(F)}}
	\sum_ {\substack{{F'\in \mathcal{F}}}}
\sum_{\substack{ J'\in \mathcal{J}^{\ast}(F)  \\ \lvert J'\rvert =2^{-s}\lvert
J\rvert }}\int\frac{1}{\lvert J\rvert ^{2}+\lvert x-x_J\rvert ^{2}} \cdot \frac{ w (J') \cdot \lvert  J'\rvert ^2  }{\lvert J'\rvert ^{2}+\lvert
x-x_{J'}\rvert ^{2}}\; d\sigma.
\end{align*}
The estimate \eqref{e.P<} has been used in the definition of $ M_s$.  
We claim the term $M_{s}$ is at most a constant times $\mathscr A_22^{-s}$, and it is here that the full Poisson $ A_2$ condition is used.

Fix $ J$, and let $ n \in \mathbb N $ be the integer chosen so that $(n-1)\lvert J\rvert \leq \textup{dist}(J,J')\leq n\lvert J\rvert $.
Estimate the integral in the definition of $ M_s$ by 
\begin{equation*}
	\frac{w (J')}{\lvert J'\rvert 
}\int \frac{\lvert J'\rvert ^{2}}{\lvert J\rvert ^{2}+\lvert
x-x_J\rvert ^{2}} \cdot \frac{\lvert J'\rvert }{\lvert J'\rvert
^{2}+\lvert x-x_{J'}\rvert ^{2}}d\sigma\lesssim \mathscr A_2 {2^{-2s}} \,. 
\end{equation*}
This estimate is adequate for $ n=0,1,2$.   Then estimate the sum over $J'$ as follows. 
\begin{equation*}
\sum_{F'\in \mathcal{F}}\sum_{\substack{ J'\in \mathcal{J}^{\ast}(F')\;:\;\lvert J'\rvert =2^{-s}\lvert J\rvert  \\ (n-1)\lvert J\rvert \leq \textup{dist}(J,J')\leq n\lvert J\rvert }}{2^{-2s}} \lesssim {2^{-s}}\,.
\end{equation*}
because the relative lengths of $J$ and $J'$ are fixed, and each $ J'$ is in at most one $ \mathcal J ^{\ast} (F)$. 

For the case of $ n\geq  3$, restrict $ J'$ to be to the right of $ J$, and let $ t_n = \frac {x_J+ x _{J'}} 2 $, so that 
$ \lvert  x_J-t_n\rvert,\ \lvert  x _{J'} - t_n\rvert \simeq n \lvert  J\rvert  $. 
First,  estimate the integral  in the definition of $ M_s$ on the interval $ [t_n, \infty )$. 
\begin{equation*}
\frac{w (J')}{\lvert J'\rvert 
}\int _{t_n} ^{\infty } \frac{\lvert J'\rvert ^{2}}{\lvert J\rvert ^{2}+\lvert
x-x_J\rvert ^{2}} \cdot \frac{\lvert J'\rvert }{\lvert J'\rvert
^{2}+\lvert x-x_{J'}\rvert ^{2}}d\sigma\lesssim \mathscr A_2 \frac {2^{-2s}} {n ^2 } 
\end{equation*}
Then estimate the sum over $J'$ as follows. 
\begin{equation*}
\sum_{F'\in \mathcal{F}}\sum_{\substack{ J'\in \mathcal{J}^{\epsilon}(F')\;:\;\lvert J'\rvert =2^{-s}\lvert J\rvert  \\ (n-1)\lvert J\rvert \leq \textup{dist}(J,J')\leq n\lvert J\rvert }}\frac {2^{-2s}} {n ^2 }\lesssim  \frac {2^{-s}} {n ^2 }\,.
\end{equation*}
This is clearly summable in $ n\ge 4$. 

Now, estimate on the integral on the interval $ (- \infty , t_n)$, 
\begin{align*}
\frac{w (J')}{\lvert J'\rvert  }
\int _{ - \infty } ^{t_n } & \frac{\lvert J'\rvert ^{2}}{\lvert J\rvert ^{2}+\lvert
x-x_J\rvert ^{2}} \cdot \frac{\lvert J'\rvert }{\lvert J'\rvert
^{2}+\lvert x-x_{J'}\rvert ^{2}}d\sigma
\\&=
 2 ^{-2s} \frac{w (J')}{\lvert J\rvert  }
\int _{ - \infty } ^{t_n } \frac{\lvert J\rvert}{\lvert J\rvert ^{2}+\lvert
x-x_J\rvert ^{2}} \cdot \frac{\lvert J\rvert ^2  }{\lvert J'\rvert
^{2}+\lvert x-x_{J'}\rvert ^{2}}d\sigma 
\\
& \lesssim 
2 ^{-2s} 
\frac{w (J')}{ n ^2 \lvert J\rvert 
} P (\sigma ,J)\,. 
\end{align*}
Drop the term with the geometric decay in $ s$, and sum over $ n$ and $ J'$ to see that 
\begin{align*}
\sum_{n=4} ^{\infty } 
\sum_{F'\in \mathcal{F}}\sum_{\substack{ J'\in \mathcal{J}^{\epsilon}(F')\;:\;\lvert J'\rvert =2^{-s}\lvert J\rvert  \\ (n-1)\lvert J\rvert \leq \textup{dist}(J,J')\leq n\lvert J\rvert }} 
\frac{w (J')}{ n ^2 \lvert J\rvert 
} P (\sigma ,I) 
\lesssim  P (w, J) P (\sigma ,J) \lesssim  \mathscr A_2 \,.  
\end{align*}
Here, we have appealed to the full Poisson $ A_2$ condition. This completes the control of the dual Poisson testing condition.

\begin{bibsection}
\begin{biblist} 
\bib{MR763911}{article}{
  author={David, Guy},
  author={Journ{\'e}, Jean-Lin},
  title={A boundedness criterion for generalized Calder\'on-Zygmund operators},
  journal={Ann. of Math. (2)},
  volume={120},
  date={1984},
  number={2},
  pages={371--397},
  review={\MR {763911 (85k:42041)}},
}

\bib{MR1334345}{collection}{
  title={Linear and complex analysis. Problem book 3. Part I},
  series={Lecture Notes in Mathematics},
  volume={1573},
  editor={Havin, V. P.},
  editor={Nikolski, N. K.},
  publisher={Springer-Verlag},
  place={Berlin},
  date={1994},
  pages={xxii+489},
  isbn={3-540-57870-6},
  review={\MR {1334345 (96c:00001a)}},
}

\bib{1007.4330}{article}{
  author={Hyt{\"o}nen, Tuomas P.},
  title={The sharp weighted bound for general Calder\'on-Zygmund operators},
  journal={Ann. of Math. (2)},
  volume={175},
  date={2012},
  number={3},
  pages={1473--1506},
}

\bib{1103.5229}{article}{
  author={Hyt{\"o}nen, Tuomas P.},
  author={Lacey, Michael T.},
  author={Martikainen, Henri},
  author={Orponen, Tuomas},
  author={Reguera, Maria Carmen},
  author={Sawyer, Eric T.},
  author={Uriarte-Tuero, Ignacio},
  title={Weak and strong type estimates for maximal truncations of Calder\'on-Zygmund operators on $A_p$ weighted spaces},
  journal={J. Anal. Math.},
  volume={118},
  date={2012},
  pages={177--220},
}

\bib{13014663}{article}{
  author={Lacey, Michael T.},
  title={Two Weight Inequality for the Hilbert Transform: A Real Variable Characterization, II},
  journal={Submitted},
  eprint={http://www.arxiv.org/abs/1301.4663},
  date={2013},
}

\bib{MR2657437}{article}{
  author={Lacey, Michael T.},
  author={Petermichl, Stefanie},
  author={Reguera, Maria Carmen},
  title={Sharp $A_2$ inequality for Haar shift operators},
  journal={Math. Ann.},
  volume={348},
  date={2010},
  number={1},
  pages={127--141},
  review={\MR {2657437}},
}

\bib{0807.0246}{article}{
  author={Lacey, Michael},
  author={Sawyer, Eric T.},
  author={Uriarte-Tuero, Ignacio},
  title={A characterization of two weight norm inequalities for maximal singular integrals with one doubling measure},
  journal={Anal. PDE},
  volume={5},
  date={2012},
  number={1},
  pages={1--60},
}

\bib{10014043}{article}{
  author={Lacey, Michael T.},
  author={Sawyer, Eric T.},
  author={Uriarte-Tuero, Ignacio},
  title={A Two Weight Inequality for the Hilbert transform Assuming an Energy Hypothesis},
  journal={J Funct Anal},
  volume={263},
  pages={305--363},
  date={2012},
}

\bib{11082319}{article}{
  author={Lacey, Michael T.},
  author={Sawyer, Eric T.},
  author={Shen, Chun-Yun},
  author={Uriarte-Tuero, Ignacio},
  title={The two weight inequality for the Hilbert transform, coronas, and energy conditions},
  eprint={http://www.arXiv.org/abs/1108.2319},
  date={2011},
}

\bib{MR1617649}{article}{
  author={Lyubarskii, Yurii I.},
  author={Seip, Kristian},
  title={Complete interpolating sequences for Paley-Wiener spaces and Muckenhoupt's $(A_p)$ condition},
  journal={Rev. Mat. Iberoamericana},
  volume={13},
  date={1997},
  number={2},
  pages={361--376},
  review={\MR {1617649 (99e:42004)}},
}

\bib{MR0293384}{article}{
  author={Muckenhoupt, Benjamin},
  title={Weighted norm inequalities for the Hardy maximal function},
  journal={Trans. Amer. Math. Soc.},
  volume={165},
  date={1972},
  pages={207--226},
  issn={0002-9947},
  review={\MR {0293384 (45 \#2461)}},
}

\bib{MR0311856}{article}{
  author={Muckenhoupt, Benjamin},
  title={Hardy's inequality with weights},
  note={Collection of articles honoring the completion by Antoni Zygmund of 50 years of scientific activity, I},
  journal={Studia Math.},
  volume={44},
  date={1972},
  pages={31--38},
  issn={0039-3223},
  review={\MR {0311856 (47 \#418)}},
}

\bib{N1}{article}{
  author={Nazarov, F.},
  title={A counterexample to Sarason's conjecture},
  date={1997},
  journal={Preprint, MSU},
  eprint={http://www.math.msu.edu/~fedja/prepr.html},
}

\bib{NTV4}{article}{
  author={Nazarov, F.},
  author={Treil, S.},
  author={Volberg, A.},
  title={Cauchy integral and Calder\'on-Zygmund operators on nonhomogeneous spaces},
  journal={Internat. Math. Res. Notices},
  date={1997},
  number={15},
  pages={703--726},
}

\bib{NTV2}{article}{
  author={Nazarov, F.},
  author={Treil, S.},
  author={Volberg, A.},
  title={Accretive system $Tb$-theorems on nonhomogeneous spaces},
  journal={Duke Math. J.},
  volume={113},
  date={2002},
  number={2},
  pages={259--312},
}

\bib{MR1998349}{article}{
  author={Nazarov, F.},
  author={Treil, S.},
  author={Volberg, A.},
  title={The $Tb$-theorem on non-homogeneous spaces},
  journal={Acta Math.},
  volume={190},
  date={2003},
  number={2},
  pages={151--239},
}

\bib{10031596}{article}{
  author={Nazarov, F.},
  author={Treil, S.},
  author={Volberg, A.},
  title={ Two weight estimate for the Hilbert transform and Corona decomposition for non-doubling measures},
  date={2004},
  eprint={http://arxiv.org/abs/1003.1596},
}

\bib{MR2407233}{article}{
  author={Nazarov, F.},
  author={Treil, S.},
  author={Volberg, A.},
  title={Two weight inequalities for individual Haar multipliers and other well localized operators},
  journal={Math. Res. Lett.},
  volume={15},
  date={2008},
  number={3},
  pages={583--597},
  review={\MR {2407233 (2009e:42031)}},
}

\bib{NV}{article}{
  author={Nazarov, F.},
  author={Volberg, A.},
  title={The Bellman function, the two-weight Hilbert transform, and embeddings of the model spaces $K_\theta $},
  note={Dedicated to the memory of Thomas H.\ Wolff},
  journal={J. Anal. Math.},
  volume={87},
  date={2002},
  pages={385--414},
}

\bib{MR1945291}{article}{
  author={Nikolski, Nikolai},
  author={Treil, S.},
  title={Linear resolvent growth of rank one perturbation of a unitary operator does not imply its similarity to a normal operator},
  note={Dedicated to the memory of Thomas H.\ Wolff},
  journal={J. Anal. Math.},
  volume={87},
  date={2002},
  pages={415--431},
  review={\MR {1945291 (2004e:47004)}},
}

\bib{ptv}{article}{
  author={P{\'e}rez, Carlos},
  author={Treil, S.},
  author={Volberg, A.},
  title={On $A_2$ conjecture and Corona decomposition of weights},
  date={2010},
  eprint={http://arxiv.org/abs/1006.2630},
}

\bib{ptv2}{article}{
  author={P{\'e}rez, Carlos},
  author={Treil, S.},
  author={Volberg, A.},
  title={Sharp weighted estimates for dyadic shifts and the $A_2$ conjecture},
  eprint={http://arxiv.org/abs/1010.0755},
  date={2010},
}

\bib{MR2198367}{article}{
  author={Poltoratski, Alexei},
  author={Sarason, Donald},
  title={Aleksandrov-Clark measures},
  conference={ title={Recent advances in operator-related function theory}, },
  book={ series={Contemp. Math.}, volume={393}, publisher={Amer. Math. Soc.}, place={Providence, RI}, },
  date={2006},
  pages={1--14},
  review={\MR {2198367 (2006i:30048)}},
}

\bib{1109.2027}{article}{
  author={Reguera, Maria Carmen},
  author={Scurry, James},
  title={On joint estimates for maximal functions and singular integrals on weighted spaces},
  journal={Proc. Amer. Math. Soc.},
  volume={141},
  date={2013},
  number={5},
  pages={1705--1717},
}

\bib{MR1207406}{article}{
  author={Sarason, Donald},
  title={Exposed points in $H^1$. II},
  conference={ title={Topics in operator theory: Ernst D. Hellinger memorial volume}, },
  book={ series={Oper. Theory Adv. Appl.}, volume={48}, publisher={Birkh\"auser}, place={Basel}, },
  date={1990},
  pages={333--347},
  review={\MR {1207406 (94a:46031)}},
}

\bib{MR676801}{article}{
  author={Sawyer, Eric T.},
  title={A characterization of a two-weight norm inequality for maximal operators},
  journal={Studia Math.},
  volume={75},
  date={1982},
  number={1},
  pages={1--11},
  review={\MR {676801 (84i:42032)}},
}

\bib{MR930072}{article}{
  author={Sawyer, Eric T.},
  title={A characterization of two weight norm inequalities for fractional and Poisson integrals},
  journal={Trans. Amer. Math. Soc.},
  volume={308},
  date={1988},
  number={2},
  pages={533--545},
  review={\MR {930072 (89d:26009)}},
}

\bib{MR1175693}{article}{
  author={Sawyer, E.},
  author={Wheeden, R. L.},
  title={Weighted inequalities for fractional integrals on Euclidean and homogeneous spaces},
  journal={Amer. J. Math.},
  volume={114},
  date={1992},
  number={4},
  pages={813--874},
  issn={0002-9327},
  review={\MR {1175693 (94i:42024)}},
}

\bib{V}{book}{
  author={Volberg, A.},
  title={Calder\'on-Zygmund capacities and operators on nonhomogeneous spaces},
  series={CBMS Regional Conference Series in Mathematics},
  volume={100},
  publisher={Published for the Conference Board of the Mathematical Sciences, Washington, DC},
  date={2003},
  pages={iv+167},
  isbn={0-8218-3252-2},
}

\end{biblist}
\end{bibsection}

\end{document}